\documentclass[a4paper,11pt]{article}
\usepackage{amsmath}
\usepackage{amssymb}
\usepackage{amscd}
\usepackage{amsthm}
\usepackage{flafter}
\usepackage{float}

\usepackage{tikz}
\usetikzlibrary{matrix,decorations.pathreplacing,calc}

\pgfkeys{tikz/mymatrixenv/.style={decoration=brace,every left delimiter/.style={xshift=3pt},every right delimiter/.style={xshift=-3pt}}}
\pgfkeys{tikz/mymatrix/.style={matrix of math nodes,left delimiter=[,right delimiter={]},inner sep=2pt,column sep=1em,row sep=0.5em,nodes={inner sep=0pt}}}
\pgfkeys{tikz/mymatrixbrace/.style={decorate,thick}}
\newcommand\mymatrixbraceoffseth{0.5em}
\newcommand\mymatrixbraceoffsetv{0.2em}

\newcommand*\mymatrixbraceright[4][m]{
	\draw[mymatrixbrace] ($(#1.north west)!(#1-#3-1.south west)!(#1.south west)-(\mymatrixbraceoffseth,0)$)
	-- node[left=2pt] {#4} 
	($(#1.north west)!(#1-#2-1.north west)!(#1.south west)-(\mymatrixbraceoffseth,0)$);
}

\newcommand*\mymatrixbracebottom[4][m]{
	\draw[mymatrixbrace] ($(#1.south west)!(#1-1-#3.south east)!(#1.south east)-(0,\mymatrixbraceoffsetv)$)
	-- node[below=2pt] {#4} 
	($(#1.south west)!(#1-1-#2.south west)!(#1.south east)-(0,\mymatrixbraceoffsetv)$);
}

\newtheorem{lemma}{Lemma}

\newtheorem{theorem}[lemma]{Theorem}
\newtheorem{corollary}[lemma]{Corollary}

\newtheorem{example}[lemma]{Example}

\newcommand{\rmv}[1]{}

\newcommand{\by}{\times}

\newcommand{\ZZ} {\mathbb{Z}}
\newcommand{\FF} {\mathbb{F}}

\newcommand{\bfa}{{\bf a}}
\newcommand{\bfb}{{\bf b}}
\newcommand{\bfr}{{\bf r}}
\newcommand{\bfs}{{\bf s}}

 \author{Daniel Panario, Mark Saaltink, Brett Stevens, Daniel Wevrick \\
 School of Mathematics and Statistics, Carleton University (Panario, Stevens)  and \\
 Independent researchers (Saaltink, Wevrick)\\
 \texttt{daniel@math.carleton.ca, mark.saaltink@gmail.com,} \\
 \texttt{brett@math.carleton.ca, dwevrick@gmail.com}}
 \date{}

\begin{document}

\title{A general construction of ordered orthogonal arrays using LFSRs}

\author{Daniel Panario,Mark Saaltink, Brett Stevens and Daniel Wevrick
\thanks{Daniel Panario is with 
           School of Mathematics and Statistics,
           Carleton University, Canada.
           Email: daniel@math.carleton.ca}
\thanks{Mark Saaltink and Daniel Wevrick are unaffiliated researchers.
           Emails: mark.saaltink@gmail.com, dwevrick@gmail.com}
\thanks{Brett Stevens is with
           School of Mathematics and Statistics,
           Carleton University, Canada.
           Email: brett@math.carleton.ca}
\thanks{Daniel Panario and Brett Stevens are partially supported by NSERC of Canada.}
}

\markboth{IEEE Transactions on Information Theory}{Panario, Saaltink, 
Stevens and Wevrick.}

\maketitle

\begin{abstract}
In \cite{Castoldi}, $q^t \by (q+1)t$ ordered orthogonal arrays (OOAs) 
of strength $t$ over the alphabet $\FF_q$ were constructed using 
linear feedback shift register sequences (LFSRs) defined by 
{\em primitive} polynomials in $\FF_q[x]$. In this paper we extend 
this result to all polynomials in $\FF_q[x]$ which satisfy some 
fairly simple restrictions, restrictions that are automatically 
satisfied by primitive polynomials. While these restrictions 
sometimes reduce the number of columns produced from $(q+1)t$ to 
a smaller multiple of $t$, in many cases we still obtain the 
maximum number of columns in the constructed OOA when using 
non-primitive polynomials. For small values of $q$ and $t$, we 
generate OOAs in this manner for all permissible polynomials of 
degree $t$ in $\FF_q[x]$ and compare the results to the ones 
produced in \cite{Castoldi}, \cite{Rosenbloom} and \cite{Skriganov} 
showing how close the arrays are to being ``full'' orthogonal arrays.
Unusually for finite fields, our arrays based on non-primitive
irreducible and even reducible polynomials are closer to
orthogonal arrays than those built from primitive polynomials.
\end{abstract}

\section{Introduction} \label{Sec:Intro}
Let $t$, $m$, $v$, $\lambda \in \ZZ^+$ with $2 \leq t \leq m$, let 
$n=\lambda v^{t}$ and let $M$ be an $n \by m$ array over a set $V$ 
(the {\em alphabet}) of cardinality $v$. An $n\times t$ subarray of 
$M$ is {\em $\lambda$-covered} if each $t$-tuple over $V$ appears 
as a row of the subarray exactly $\lambda$ times; the corresponding 
set of $t$ columns of $M$ is {\em $\lambda$-covered}. An {\em 
orthogonal array} (OA) of {\em strength} $t$ and {\em index} $\lambda$ 
is an $n \by m$ array such that every set of $t$ columns is 
$\lambda$-covered. We use $OA_{\lambda}(n;t,m,v)$ to denote such 
an array.

A generalization of the concept of an OA is an {\em ordered orthogonal 
array} (OOA) \cite{Lawrence}, \cite{Mullen}. For $s \in \ZZ^+$, let 
$M$ be an $n \times ms$ array with columns labeled by $\{1,2,\ldots, m\} 
\times \{1,2,\ldots,s\}$ and entries from a set $V$ (the {\em alphabet}) 
of cardinality $v$. For $1 \le i \le m$, the set of columns indexed 
by $\{i\} \times \{1,2,\ldots,s\}$ is the {\em block} $Bl_M(i)$.
Thus,
\[
M=\left[\begin{array}{c|c|c|c}
 Bl_M(1) & Bl_M(2) & \cdots & Bl_M(m)\\
\end{array}
\right].
\]

A subset $\Omega$ of columns of $M$ is {\em left-justified} if 
$(i, j) \in \Omega$ with $j>1$ implies $(i, j-1) \in \Omega$.
Thus, for each block $Bl_M(i)$, the columns in $\Omega$ that are 
in $Bl_M(i)$ are consecutive leftmost ones in $Bl_M(i)$, with 
indices $(i,1),(i,2),\ldots, (i,l_i)$ where $l_i$ is the number 
of columns in $\Omega$ chosen from $Bl_M(i)$. We observe that 
$0 \le l_i \le s$ and so $l_i$ could be $0$, which occurs when 
no columns of $\Omega$ are from $Bl_M(i)$. If $M$ has the 
property that every left-justified set $\Omega$ of $t$ columns 
of $M$ is $\lambda$-covered, then $M$ is an {\em ordered orthogonal 
array}, and is denoted $OOA_{\lambda}(n;t,m,s,v)$.

As before, $t$ is the {\em strength} of the OOA and $\lambda$ is 
the {\em index}. For both OAs and OOAs, if $\lambda = 1$, we simply 
say {\em covered} and write $OA(n;t,m,v)$ and $OOA(n;t,m,s,v)$ to 
denote such arrays. By setting $s=1$, an $OOA_{\lambda}(n;t,m,1,v)$ 
can easily be seen to be an $OA_{\lambda}(n;t,m,v)$ as every subset 
of $t$ columns is left-justified. As a concrete example, an 
$OOA(8;3,2,3,2)$ with $2$ blocks of size $3$ over the alphabet 
$\FF_2$ is shown in Figure~\ref{Fig:fig1}.

\begin{figure}[H]
\normalsize
\centering
\begin{equation*}
OOA(8;3,2,3,2)=\left[\begin{array}{ccc|ccc}
$(1,1)$ & $(1,2)$ &  $(1,3)$ & $(2,1)$ &  $(2,2)$ & $(2,3)$  \\ \hline
0 & 0 & 1 & 1 & 1 & 0  \\
0 & 1 & 1 & 0 & 1 & 1  \\
1 & 1 & 0 & 0 & 0 & 1  \\
1 & 0 & 0 & 1 & 0 & 0  \\
0 & 1 & 0 & 1 & 0 & 1  \\
1 & 0 & 1 & 0 & 1 & 0  \\
0 & 0 & 0 & 0 & 0 & 0  \\
1 & 1 & 1 & 1 & 1 & 1  \\
\end{array}\right]
\end{equation*}
\caption{A strength $3$ OOA over alphabet $\FF_2$.}
\label{Fig:fig1}
\end{figure}

In an OA, all subsets of $t$ columns are $\lambda$-covered and in 
an OOA all left-justified subsets of $t$ columns are $\lambda$-covered.
In an OOA, it is possible that additional subsets of $t$ columns, 
those that are not left-justified, are also $\lambda$-covered.
An OOA that has a significant percentage of non left-justified
subsets of $t$ columns $\lambda$-covered can be thought of as being 
closer to an OA and as having {\em better coverage}.

We observe that the ordering and letters used for the parameters
in the notation, $OA_{\lambda}(n;t,m,v)$ and $OOA_{\lambda}(n;t,m,s,v)$,
is identical with the notation in~\cite{Castoldi}.  The order of the parameters is the same as that used in the Handbook of Combinatorial Designs \cite{Colbourn} but
may be different from other reference books and/or papers \cite{HFF}.

More information about OAs and OOAs can be found in 
\cite[Section VI.59.3]{Colbourn}, \cite{Hedayat} and 
\cite[Chapter 3]{Tamar}.

Ordered orthogonal arrays are the combinatorial analog of $(t,m,s)$-nets which were introduced by Niederreiter for their utility in quasi-Monte Carlo numerical integration~\cite{NiederMC}. Let $[0, 1)^s$ be the half-open unit cube of dimension $s$. An {\em elementary interval in base b} in $[0, 1)^s$ is a set of the form
\[
E = \prod_{i=1}^{s}\left[ \dfrac{a_{i}}{b^{d_{i}}},\dfrac{a_{i}+1}{b^{d_{i}}} \right)
\]
where, for each $i$, $d_{i} \geq 0$ and  $0\leq a_{i} < b^{d_{i}}$. The volume of $E$ is $b^{-\sum d_{i}}$.
Let $s \geq 1$, $b \geq 2$, and $m \geq t \geq 0$ be integers. A {\em $(t, m, s)$-net in base $b$} is a multiset
$\mathcal{N}$ of $b^{m}$ points in $[0, 1)^s$ with the property that every elementary interval in base $b$
of volume $b^{t-m}$ contains precisely $b^t$ points from $\mathcal{N}$. This property is used to prove that the point set of a $(t, m, s)$-net has a bounded {\em star-discrepancy} which, using the Koksma-Hlawka Inequality, in turn provides a bound on the error in the quasi-Monte Carlo numerical integration \cite{Colbourn}.  In Section~\ref{Sec:ff2} we will compute the star-discrepancy of some of the $(t, m, s)$-nets from our construction.

The relationship between $(t,m,s)$-nets and OOAs was proved by Lawrence and independently by Mullen and Schmid.
\begin{theorem}(\hspace{-0.2mm}\cite{Lawrence,Mullen})\label{tms-net}
Let $s\geq 1$, $b\geq 2$, $t\geq 0$ and $m$ be integers, and assume
that $m \geq t+1$ to avoid degeneracy. Then there exists a
$(t,m,s)$-net in base $b$ if and only if there exists an 
$OOA_{b^{t}}(b^{m};m-t,s,m-t,b)$.
\end{theorem}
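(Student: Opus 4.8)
The plan is to set up a correspondence between multisets of $b^m$ points of $[0,1)^s$ and $b^m\times s(m-t)$ arrays over the alphabet $\{0,1,\dots,b-1\}$, grouped into $s$ blocks of $m-t$ columns each, and then to check that, under this correspondence, the defining property of a $(t,m,s)$-net in base $b$ is \emph{exactly} the statement that every left-justified set of $m-t$ columns of the array is $b^t$-covered, i.e.\ that the array is an $OOA_{b^t}(b^m;m-t,s,m-t,b)$. Because $m\ge t+1$, the integer $m-t$ is positive, so the target OOA has positive strength and positive block size and the construction does not degenerate.

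First I would fix digit conventions: for $x\in[0,1)$ write $x=\sum_{j\ge1}x_{(j)}b^{-j}$ with digits $x_{(j)}\in\{0,\dots,b-1\}$, using for $b$-adic rationals the expansion that does not end in an infinite string of the digit $b-1$, so that $\lfloor b^{d}x\rfloor=\sum_{j=1}^{d}x_{(j)}b^{d-j}$ for every $d\ge0$. To a point $P=(x_1,\dots,x_s)\in[0,1)^s$ I associate the row $\rho(P)$ of length $s(m-t)$ whose entry in block $i$, position $j$ (for $1\le i\le s$, $1\le j\le m-t$) is the digit $(x_i)_{(j)}$; conversely a row with this shape determines a point by reading the block-$i$ entries as the leading $m-t$ base-$b$ digits of coordinate $i$ and setting all further digits to $0$. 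Writing the members of a multiset $\mathcal N$ of $b^m$ points as rows produces a $b^m\times s(m-t)$ array $M$ (different orderings give arrays differing only by a row permutation, which does not affect the OOA property), and conversely $M$ determines $\mathcal N$ as a multiset.

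Next I would translate elementary intervals of volume $b^{t-m}$, which are the sets $E=\prod_{i=1}^s[a_i/b^{d_i},(a_i+1)/b^{d_i})$ with $d_i\ge0$ and $\sum_i d_i=m-t$. By the digit convention, $x_i\in[a_i/b^{d_i},(a_i+1)/b^{d_i})$ if and only if $\lfloor b^{d_i}x_i\rfloor=a_i$, i.e.\ if and only if the first $d_i$ digits of $x_i$ spell the $d_i$-digit base-$b$ representation of $a_i$. Hence $P\in E$ depends only on the entries of $\rho(P)$ in the positions $(i,1),\dots,(i,d_i)$ for $1\le i\le s$; putting $l_i:=d_i$, this set of positions is precisely a left-justified column set $\Omega$ with $\sum_i l_i=m-t$, and the prescribed digits are precisely one row of the $\Omega$-subarray. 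Conversely, every left-justified $\Omega$ with $|\Omega|=m-t$ comes from a unique composition $(d_1,\dots,d_s)$ of $m-t$ into $s$ nonnegative parts, and every possible row of the $\Omega$-subarray pins down the $a_i$ and hence a unique interval $E$ of volume $b^{t-m}$ of that shape; the zero-padding is harmless because $d_i\le m-t$, so $E$ never constrains any digit of $x_i$ beyond position $m-t$.

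Finally I would assemble the equivalence. Under $\rho$, the assertion ``$E$ contains exactly $b^t$ points of $\mathcal N$'' becomes ``exactly $b^t$ rows of $M$, restricted to the associated left-justified $\Omega$, equal the associated tuple of values''. Letting $E$ range over all elementary intervals of volume $b^{t-m}$ --- equivalently, letting the pair (left-justified $\Omega$ with $|\Omega|=m-t$, tuple of values) range over all such pairs --- this says precisely that $M$ is an $OOA_{b^t}(b^m;m-t,s,m-t,b)$; conversely, reading off the point multiset of such an $M$ gives a $(t,m,s)$-net in base $b$. This establishes both directions. I expect the only delicate point to be the digit convention, which is what makes membership in $[a/b^{d},(a+1)/b^{d})$ depend on exactly the first $d$ base-$b$ digits (the usual subtlety that $b$-adic rationals have two expansions); the remaining work is the bookkeeping that matches left-justified sets of $m-t$ columns with the shapes $(d_1,\dots,d_s)$ of volume-$b^{t-m}$ elementary intervals, which is straightforward.
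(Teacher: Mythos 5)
Your argument is correct: it is the standard digit-extraction correspondence between point multisets and arrays, with the two genuine subtleties (the choice of terminating base-$b$ expansions, and the harmlessness of zero-padding since every volume-$b^{t-m}$ elementary interval has $d_i\le m-t$) properly addressed, and the bijection between such intervals and pairs (left-justified $(m-t)$-set, value tuple) is exactly what makes the two defining properties coincide. The paper itself gives no proof of this theorem --- it is quoted from \cite{Lawrence,Mullen} --- and your proof is essentially the argument found in those sources, so there is nothing to contrast.
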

If the rows of an OOA form a linear subspace, as is the case for all the OOAs constructed in this article, then the corresponding net is {\em digital}~\cite{Colbourn}.

Earlier papers giving constructions of OOAs include \cite{Castoldi}, 
\cite{Rosenbloom} and \cite{Skriganov}. Our paper generalizes the 
construction given in \cite{Castoldi} which uses linear feedback 
shift registers defined by primitive polynomials in $\FF_q[x]$. In 
our construction, primitivity is not required. Section \ref{Sec:LFSR} 
reviews, generalizes and simplifies a number of results from~\cite{Castoldi}.
A key technical ingredient in~\cite{Castoldi} is the use of trace 
representation since, in that paper, only primitive polynomials are considered. 
Here we cannot use this ingredient since no trace representation 
is readily available for non-primitive polynomials. Section~\ref{Sec:GR} 
gives the new construction and Section~\ref{Sec:proofOOA} provides 
a proof of the correctness of the construction. In place of the 
trace representation, we use the fact that the annihilating polynomial 
commutes with the left shift operator and scalar multiplication. 
In Section~\ref{Sec:ff2}, we investigate the case of polynomials 
in $\FF_2[x]$, provide statistical data on the OOAs produced and 
compare these new OOAs to the ones produced in \cite{Castoldi}, 
\cite{Rosenbloom} and \cite{Skriganov}. Similar results for larger 
field sizes are provided in Appendix~\ref{App:stat_analysis}. An 
initial analysis of the data shows that, on many occasions, the 
new construction produces OOAs that have better coverage than 
previously generated ones. In contrast to other situations in 
finite fields, here irreducible and reducible polynomials provide 
better coverage than primitive polynomials; see tables in 
Section~\ref{Sec:ff2} and Appendix~\ref{App:stat_analysis}.

\section{Linear Feedback Shift Registers} \label{Sec:LFSR}
Let $\FF_q$ be a finite field of order $q$ and let $f \in \FF_q[x]$ be 
a monic polynomial of degree $t$. Writing $f(x) = \sum_{i=0}^{t}b_ix^i$, define a linear feedback shift register (LFSR) whose output stream, 
$S(f,T) = \bfs  = (\bfs_n)_{n \ge 0}$ is given by

$$
\bfs_{n+t} = -\sum_{i=0}^{t-1}b_i\bfs_{n+i}, \qquad n \ge 0,
$$
for a given initial $t$-tuple $T = (\bfs_0,\bfs_1, \ldots, 
\bfs_{t-1}) \in \FF_q^t$.

This is equivalent to having $\sum_{i=0}^{t}b_i\bfs_{n+i} = 0$ for 
all integers $n \ge 0$. Then, $f$ {\em annihilates} such streams and
such streams are {\em annihilated} by $f$. Let $G(f)$ denote 
the set of streams over $\FF_q$ that satisfy the recurrence defined 
by $f$.

If $b_0 \ne 0$ we can ``run the LFSR in reverse'' by defining
$$
\bfs_{n-t} = -b_{0}^{-1}\sum_{i=1}^{t}b_i\bfs_{n-t+i}, \qquad n \in \ZZ,
$$
and so we can allow the indexing set for ${\bf s} $ to be $\ZZ$ and 
not just the non-negative integers. Thus, $\sum_{i=0}^{t}b_i\bfs_{n+i} = 0$ 
for all integers $n$. We also note that $\bfs$ is periodic; see 
\cite{Golomb} for more information about linear feedback shift registers.

\subsection{Operations on streams}\label{Sec:StrOp}
Fix a monic polynomial $f \in \FF_q[x]$ with non-zero constant term.
For $\bfs = (\bfs_n)_{n \ge 0} \in G(f)$ and $\beta \in \FF_q$, we 
define $L \bfs  $ to be the stream ${\bf b} = (\bfs_{n+1})_{n \ge 0}$ 
and $\beta \bfs $ to be the stream {\bf b} = $(\beta \bfs_n)_{n \ge 0}$.
Here $L$ is the {\em left-shift operator} and these operations can be 
extended in the obvious way to define $g(L){\bf s} $ for $g \in \FF_q[x]$.
From \cite{Golomb}, a stream $\bfr$ has {\it a run of zeroes of length 
$l$ at index $n$} if
$$
\bfr_{n-1} \ne 0, \bfr_n = \bfr_{n+1} = \cdots = \bfr_{n+l-1} =0,  \bfr_{n+l} \ne 0.
$$
For Lemmas $1$-$5$ we always have the following context:
\begin{itemize}
\item $f \in \FF_q[x]$ is a monic polynomial of degree $t$ with $f(x) = \sum_{i=0}^{t}b_ix^i$ and $b_0 \ne 0$;
\item $\beta \in \FF_q$ with $f(\beta) \ne 0$;
\item $\bfr, \bfs \in G(f)$ with $\bfr = (L-\beta)\bfs$;
\item $\bfr$ has a run of $l \ge 0$ zeroes at index $0$ (hence $\bfr_{l} \ne 0$).
\end{itemize}

The following two lemmas are parts of Proposition $6$ in \cite{Castoldi} and are included here for completeness.

\begin{lemma} \cite[Proposition $6$]{Castoldi} \label{Lem:betapow}
$(\bfs_{0}, \bfs_{1},  \bfs_{2}, \ldots, \bfs_{l}) = 
(\bfs_{0}, \beta \bfs_{0}, \beta^2 \bfs_{0}, \ldots, \beta^l\bfs_{0})$.
\end{lemma}

\begin{proof}
This follows from $(L-\beta)\bfs = \bfr$ and the assumption that 
$\bfr$ has a run of $l \ge 0$ zeroes at index $0$.
\end{proof}

\begin{lemma}\cite[Proposition $6$]{Castoldi} \label{Lem:zeroes}
$\bfs_{0} = 0$ if and only if $\bfs$ has a run of $l+1$ zeroes 
starting at index $0$.
\end{lemma}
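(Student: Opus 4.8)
The plan is to exploit the previous lemma together with the fact that $\bfs$ is itself a stream in $G(f)$, and $f$ has nonzero constant term so $b_0 \neq 0$. First, for the forward direction, suppose $\bfs_0 = 0$. By Lemma~\ref{Lem:betapow} we have $\bfs_i = \beta^i \bfs_0$ for $0 \le i \le l$, so $\bfs_0 = \bfs_1 = \cdots = \bfs_l = 0$; this already gives a run of zeroes of length at least $l+1$ starting at index $0$. To see the run has length \emph{exactly} $l+1$, I would show $\bfs_{l+1} \neq 0$: from $\bfr = (L-\beta)\bfs$ we have $\bfr_l = \bfs_{l+1} - \beta \bfs_l = \bfs_{l+1} - 0 = \bfs_{l+1}$, and since $\bfr$ has a run of exactly $l$ zeroes at index $0$ we know $\bfr_l \neq 0$, hence $\bfs_{l+1} \neq 0$. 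For the reverse direction, if $\bfs$ has a run of $l+1$ zeroes starting at index $0$, then in particular $\bfs_0 = 0$, which is immediate.

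The one point that needs a little care — and the closest thing to an obstacle — is the forward direction in the edge case $l = 0$. When $l = 0$ the hypothesis is that $\bfr_0 \neq 0$ (a run of zero zeroes), and Lemma~\ref{Lem:betapow} reduces to the trivial statement $(\bfs_0) = (\bfs_0)$. Then the claim becomes: $\bfs_0 = 0$ iff $\bfs$ has a run of exactly one zero at index $0$, i.e. $\bfs_0 = 0$ and $\bfs_1 \neq 0$. The forward implication still works since $\bfr_0 = \bfs_1 - \beta\bfs_0 = \bfs_1$, and $\bfr_0 \neq 0$ forces $\bfs_1 \neq 0$. So the same computation covers all cases uniformly, and no separate argument is needed.

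I should double-check the definition of ``run of zeroes'' is being applied consistently: the convention in the excerpt requires $\bfr_{n-1} \neq 0$ as well, so strictly a run of $l$ zeroes at index $0$ would demand $\bfr_{-1} \neq 0$. Since $b_0 \neq 0$ the stream extends to a two-sided sequence indexed by $\ZZ$, so $\bfr_{-1}$ is well-defined and this causes no issue; if instead one reads ``run of zeroes starting at index $0$'' without the left-boundary condition, the argument is unchanged. Either way, the whole proof is just the two displayed identities $\bfs_i = \beta^i\bfs_0$ for $i \le l$ and $\bfr_l = \bfs_{l+1} - \beta\bfs_l$, so the write-up should be only two or three sentences.
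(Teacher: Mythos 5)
Your proof is correct and follows essentially the same route as the paper, which simply states that the lemma ``follows from Lemma~\ref{Lem:betapow}''; you have filled in the details that proof leaves implicit, namely that $\bfs_{l+1}=\bfr_l+\beta\bfs_l=\bfr_l\ne 0$ ensures the run has length exactly $l+1$, and that the reverse direction is immediate. Your remarks on the $l=0$ case and the left-boundary convention are careful but do not change the argument.
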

\begin{proof}
This follows from Lemma~\ref{Lem:betapow}.
\end{proof}

From $\bfr = (L-\beta)\bfs$, it follows that if $\bfs$ has a run of $l+1$ zeroes at index $0$ (i.e. when $\bfs_0 = 0$), then $\bfr$ has a run of $l$ zeroes at index $0$.
Momentarily disregarding the fourth assumption that $\bfr$ has a run of $l$ zeroes at index $0$, we note that even if $\bfs_0 \ne 0$, Lemma~\ref{Lem:betapow} shows that it is still possible to obtain a run of $l$ zeroes in $\bfr$.
In \cite{Castoldi}, a special polynomial was constructed from $\bfr$ whose roots determine the exact conditions under which the run of $l$ zeroes in $\bfr$ came from a run of $l+1$ zeroes in $\bfs$.
This polynomial is:

$$
P_{l,\bfr}(x) = \sum_{j=l+1}^{t}b_j \Big(\sum_{m=l}^{j-1}\bfr_{m}x^{j-1-m}\Big).
$$

The next three lemmas prove properties of $P_{l,\bfr}$.
These results were shown as Theorem $1$ in \cite{Castoldi}, but the 
proofs given here are significantly shorter and less complicated than the ones  given there.
These lemmas are used in Section~\ref{Sec:GR}
to prove the correctness of the construction given in this paper.

\begin{lemma}\cite[Theorem $1$]{Castoldi}\label{Lem:magic}
The polynomial  $\bfs_{0}f + P_{l,\bfr}$ has $\beta$ as a
root.
\end{lemma}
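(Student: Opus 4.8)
The plan is to evaluate $\bfs_{0}f+P_{l,\bfr}$ at $\beta$ directly: since ``has $\beta$ as a root'' just means $(\bfs_{0}f+P_{l,\bfr})(\beta)=0$, it suffices to prove the identity $P_{l,\bfr}(\beta)=-\bfs_{0}\,f(\beta)$. Everything will come from three facts already in hand: the coordinatewise form $\bfr_m=\bfs_{m+1}-\beta\bfs_m$ of $\bfr=(L-\beta)\bfs$; Lemma~\ref{Lem:betapow}, which gives $\bfs_i=\beta^i\bfs_0$ for $0\le i\le l$; and the recurrence $\sum_{i=0}^{t}b_i\bfs_i=0$ satisfied at index $0$ by every element of $G(f)$.

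First I would rewrite each inner sum appearing in $P_{l,\bfr}(\beta)$ using $\bfr_m=\bfs_{m+1}-\beta\bfs_m$ and reindex; for every $j$ with $l+1\le j\le t$ the sum telescopes, leaving only its boundary terms:
$$
\sum_{m=l}^{j-1}\bfr_m\beta^{j-1-m}
=\sum_{m=l}^{j-1}(\bfs_{m+1}-\beta\bfs_m)\beta^{j-1-m}
=\bfs_j-\beta^{j-l}\bfs_l .
$$
Applying Lemma~\ref{Lem:betapow} to replace $\bfs_l$ by $\beta^{l}\bfs_0$ turns the right-hand side into $\bfs_j-\beta^{j}\bfs_0$, so that
$$
P_{l,\bfr}(\beta)=\sum_{j=l+1}^{t}b_j\bfs_j-\bfs_0\sum_{j=l+1}^{t}b_j\beta^{j}.
$$

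Next I would extend both sums down to $i=0$. By Lemma~\ref{Lem:betapow} again, $b_i\bfs_i=\bfs_0\,b_i\beta^{i}$ for $0\le i\le l$, so adding $\sum_{i=0}^{l}b_i\bfs_i$ to the first sum and the equal quantity $\bfs_0\sum_{i=0}^{l}b_i\beta^{i}$ to the subtracted second sum leaves the value unchanged, giving
$$
P_{l,\bfr}(\beta)=\sum_{i=0}^{t}b_i\bfs_i-\bfs_0\sum_{i=0}^{t}b_i\beta^{i}
=\Bigl(\sum_{i=0}^{t}b_i\bfs_i\Bigr)-\bfs_0\,f(\beta).
$$
Since $\bfs\in G(f)$, the bracketed sum is $0$, hence $P_{l,\bfr}(\beta)=-\bfs_0\,f(\beta)$, i.e.\ $(\bfs_0 f+P_{l,\bfr})(\beta)=0$.

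I do not expect a real obstacle: the argument is pure bookkeeping, which is exactly why it is shorter than the original proof in~\cite{Castoldi}. The only points needing care are getting the boundary terms of the telescoping sum right (and checking that the degenerate case $l=0$, where $\bfr_0\ne 0$ and there is no run of zeroes, still goes through), and applying Lemma~\ref{Lem:betapow} consistently to both the $\bfs_j$ terms and the $\beta^{j}\bfs_0$ terms. Note that the hypothesis $f(\beta)\ne 0$ is not used in this lemma; it becomes relevant only in the subsequent lemmas about $P_{l,\bfr}$.
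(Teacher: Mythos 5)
Your proof is correct and is essentially the same computation as the paper's: both substitute $\bfr_m=\bfs_{m+1}-\beta\bfs_m$ into $P_{l,\bfr}(\beta)$, use the geometric form of the initial segment of $\bfs$ (equivalently, the vanishing of $\bfr_0,\dots,\bfr_{l-1}$) to extend the sum down to index $0$, and conclude from the annihilation relation $\sum_{i=0}^{t}b_i\bfs_i=0$. The only difference is organizational --- you telescope the inner sum directly, whereas the paper first establishes $\bfs_j=\bfs_0\beta^j+\sum_{m=0}^{j-1}\bfr_m\beta^{j-1-m}$ by induction and then substitutes --- and your observation that $f(\beta)\ne 0$ is not needed here matches the paper, which invokes it only in Lemma~\ref{Lem:betaroot}.
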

\begin{proof}
For $j \in \ZZ$, since $\bfr = (L-\beta)\bfs$, we have $\bfr_j = \bfs_{j+1} - 
\beta \bfs_j$ or equivalently $\bfs_{j+1} = \bfr_j + \beta \bfs_j$.
Hence $\bfs_1 = \bfr_0 + \beta \bfs_0, \bfs_2 = \bfr_1 + \beta \bfs_1 
= \bfr_1 + \beta \bfr_0 + \beta^2 \bfs_0$, and so we have by induction
$$
\bfs_j = \bfs_0 \beta^j + \sum_{m=0}^{j-1} \bfr_m \beta^{j-1-m} 
 \quad \mbox{for all}\ j \in \mathbb{N}_0.
$$
Since $\bfs$ is annihilated by $f$, we have

\hspace{2.0in}
$0 = \sum_{j=0}^t b_j \bfs_j$

\vspace{0.1in}

\hspace{2.14in}
$= \sum_{j=0}^t b_j \Big(\bfs_0 \beta^j + \sum_{m=0}^{j-1} 
 \bfr_m \beta^{j-1-m}\Big)$

\vspace{0.1in}

\hspace{2.14in}
$= \bfs_0 \sum_{j=0}^t b_j \beta^j + \sum_{j=0}^t b_j 
\Big(\sum_{m=0}^{j-1} \bfr_m\beta^{j-1-m}\Big)$

\vspace{0.1in}
	
\hspace{2.14in}
$= \bfs_0 \ f(\beta)
+ \sum_{j=0}^t b_j\Big( \sum_{m=0}^{j-1} \bfr_m\beta^{j-1-m}\Big)$.

\noindent 
Since $\bfr_0 = \bfr_1 = \cdots = \bfr_{l-1} = 0$, the first $l$ 
terms of the inner sum are $0$ and so we have
$$
0 =  \bfs_0 f(\beta) + \sum_{j=0}^t b_j\Bigg( \sum_{m=l}^{j-1} 
\bfr_m\beta^{j-1-m}\Bigg).
$$
When $0 \le j \le l$, the inner sum is empty and so 
$$
0 =  \bfs_0 f(\beta) + \sum_{j=l+1}^t b_j\Bigg( \sum_{m=l}^{j-1} 
\bfr_m\beta^{j-1-m} \Bigg) = \bfs_0 f(\beta) + P_{l,\bfr}(\beta).
$$
Hence $\beta$ is a root of $\bfs_0 f + P_{l,\bfr}$ as claimed.
\end{proof}

\begin{lemma}\cite[Theorem $1$]{Castoldi}\label{Lem:betaroot}
The stream $\bfs$ has a run of zeroes of length $l+1$ starting at index 
$0$ if and only if $\beta$ is root of $P_{l,\bfr}(x)$ if and only if 
$\bfs_0 = 0$.
\end{lemma}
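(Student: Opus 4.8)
The plan is to chain together the three equivalences by proving the two biconditionals ``$\bfs$ has a run of $l+1$ zeroes at index $0$ $\iff$ $\bfs_0 = 0$'' and ``$\bfs_0 = 0$ $\iff$ $\beta$ is a root of $P_{l,\bfr}$'', since transitivity then gives the full statement. The first of these is already in hand: it is exactly Lemma~\ref{Lem:zeroes}, so no new work is needed there. Everything therefore reduces to showing that $\bfs_0 = 0$ holds if and only if $\beta$ is a root of $P_{l,\bfr}(x)$.

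For that remaining equivalence I would invoke Lemma~\ref{Lem:magic}, which says $\beta$ is a root of $\bfs_0 f + P_{l,\bfr}$; equivalently, evaluating at $\beta$, we have $\bfs_0 f(\beta) + P_{l,\bfr}(\beta) = 0$. Now use the standing hypothesis that $f(\beta)\ne 0$. If $\bfs_0 = 0$, the identity collapses to $P_{l,\bfr}(\beta) = 0$, so $\beta$ is a root of $P_{l,\bfr}$. Conversely, if $P_{l,\bfr}(\beta) = 0$, then $\bfs_0 f(\beta) = 0$, and since $f(\beta)$ is a nonzero element of the field $\FF_q$ (hence invertible), we may cancel it to conclude $\bfs_0 = 0$. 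This gives the ``$\bfs_0 = 0 \iff \beta$ is a root of $P_{l,\bfr}$'' equivalence, and combining with Lemma~\ref{Lem:zeroes} finishes the proof.

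There is essentially no obstacle here: the lemma is a packaging of Lemma~\ref{Lem:magic} and Lemma~\ref{Lem:zeroes} together with the invertibility of $f(\beta)$. The one point that deserves a sentence of care is making sure the hypothesis $f(\beta)\ne 0$ — which is part of the standing context for Lemmas~1--5 — is explicitly used, since it is exactly what licenses the cancellation step and is the only place the non-vanishing of $f$ at $\beta$ enters. I would not belabor anything else; the argument is three or four lines once the earlier lemmas are cited.
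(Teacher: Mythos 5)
Your proposal is correct and is exactly the paper's argument: the paper's one-line proof cites Lemmas~\ref{Lem:zeroes} and~\ref{Lem:magic} together with $f(\beta)\neq 0$, and you have simply spelled out the cancellation step that makes the second equivalence work. Nothing is missing.
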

\begin{proof}
This follows from Lemmas~\ref{Lem:zeroes} and \ref{Lem:magic} and 
the fact that $f(\beta) \ne 0$.
\end{proof}
To make the next results easier to understand, we re-write 
$P_{l,\bfr}(x)$ via a number of steps.
First, reverse the inner sum to get
$$
P_{l,\bfr}(x) =  
\sum_{j=l+1}^t b_j\Bigg( \sum_{m=0}^{j-l-1} \bfr_{j-1-m}x^{m}
\Bigg).
$$

Then, switch the order of the two sums to get
$$
P_{l,\bfr}(x) =  
\sum_{m=0}^{t-l-1} \Bigg(\sum_{j=l+m+1}^t b_j \bfr_{j-1-m}\Bigg)\ x^{m}.
$$
Lastly re-index the inner sum to get
\begin{equation}\label{(1)}
P_{l,\bfr}(x) =  
\sum_{m=0}^{t-l-1} \Bigg(\sum_{j=0}^{t-m-l-1} 
b_{j+m+l+1} \bfr_{j+l}\Bigg)\ x^{m}.
\end{equation}

We observe that $P_{l,\bfr}$ has degree smaller than or equal to $t-l-1$ 
and the coefficient of $x^{t-l-1}$ is $b_t\bfr_{l}$. In fact, 
$P_{l,\bfr}$ has degree $t-l-1$ since $b_t  = 1$ and $\bfr_{l} \ne 0$.

Now, if $\bfs$ has a run of $l+1$ zeros starting at index $0$, then 
we can express the polynomial $P_{l+1,\bfs}(x)$, for this tuple of 
zeroes, as

\begin{equation}\label{(2)}
P_{l,\bfr}(x) =  
\sum_{j=0}^t b_j\Bigg( \sum_{m=0}^{j-l-1} \bfr_{j-1-m}x^{m}
\Bigg).
\end{equation}

By Lemma~\ref{Lem:betaroot}, $\beta$ is a root of $P_{l,\bfr}$.
Hence $P_{l,\bfr}(x) = (x-\beta)A(x)$ for some $A \in \FF_q[x]$.
We show that $A$ is, in fact, $P_{l+1,\bfs}(x)$

\begin{lemma}\cite[Theorem $1$]{Castoldi}\label{Lem:ltolp}
Suppose $\bfs$ has a run of at least $l+1$ zeros starting at index $0$.
Then we have $P_{l,\bfr}(x) = (x-\beta)P_{l+1,\bfs}(x)$.
\end{lemma}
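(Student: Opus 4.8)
The plan is a direct computation: substitute the relation $\bfr_m = \bfs_{m+1} - \beta\bfs_m$ (valid for all $m$, since $\bfr = (L-\beta)\bfs$, and already noted in the proof of Lemma~\ref{Lem:magic}) into the defining expression for $P_{l,\bfr}$ and then reorganize the resulting double sum until the factor $x-\beta$ and the polynomial $P_{l+1,\bfs}$ emerge. Before starting I would record what the added hypothesis gives: since $\bfs$ has a run of at least $l+1$ zeroes at index $0$, we have $\bfs_0 = \bfs_1 = \cdots = \bfs_l = 0$ (and, by Lemma~\ref{Lem:betaroot}, the run in fact has length exactly $l+1$, although only the vanishing of these $l+1$ entries will be used).

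Concretely, I would start from $P_{l,\bfr}(x) = \sum_{j=l+1}^{t} b_j \sum_{m=l}^{j-1} \bfr_m x^{j-1-m}$, replace each $\bfr_m$ by $\bfs_{m+1}-\beta\bfs_m$, and split the result as $\Sigma_1 - \beta\,\Sigma_2$, where $\Sigma_2 = \sum_{j=l+1}^{t} b_j \sum_{m=l}^{j-1}\bfs_m x^{j-1-m}$ and $\Sigma_1$ is the analogous sum with $\bfs_{m+1}$ in place of $\bfs_m$. In $\Sigma_2$ the $m=l$ term vanishes because $\bfs_l = 0$, and the $j=l+1$ summand has an empty inner sum, so $\Sigma_2 = \sum_{j=l+2}^{t} b_j \sum_{m=l+1}^{j-1}\bfs_m x^{j-1-m} = P_{l+1,\bfs}(x)$. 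For $\Sigma_1$ I would shift the inner index $m \mapsto m-1$ to get $\Sigma_1 = \sum_{j=l+1}^{t} b_j \sum_{m=l+1}^{j}\bfs_m x^{j-m}$, then peel off the $m=j$ term, using $x^{j-m} = x\cdot x^{j-1-m}$ for $m\le j-1$, to obtain $\Sigma_1 = \big(\sum_{j=l+1}^{t} b_j \bfs_j\big) + x\sum_{j=l+2}^{t} b_j \sum_{m=l+1}^{j-1}\bfs_m x^{j-1-m} = \big(\sum_{j=l+1}^{t} b_j \bfs_j\big) + x\,P_{l+1,\bfs}(x)$.

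The one genuinely non-mechanical step is dispatching the stray sum $\sum_{j=l+1}^{t} b_j \bfs_j$: because $f$ annihilates $\bfs$ we have $\sum_{j=0}^{t} b_j \bfs_j = 0$, and the omitted terms $\sum_{j=0}^{l} b_j \bfs_j$ all vanish since $\bfs_0 = \cdots = \bfs_l = 0$, so $\sum_{j=l+1}^{t} b_j \bfs_j = 0$ and hence $\Sigma_1 = x\,P_{l+1,\bfs}(x)$. Combining the two pieces gives $P_{l,\bfr}(x) = \Sigma_1 - \beta\,\Sigma_2 = x\,P_{l+1,\bfs}(x) - \beta\,P_{l+1,\bfs}(x) = (x-\beta)P_{l+1,\bfs}(x)$, which is the claim. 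I do not anticipate a real obstacle; the only places requiring care are the index bookkeeping in the shift $m\mapsto m-1$ and in peeling off the top term, and the empty-sum edge case at $j=l+1$. (One could instead invoke Lemma~\ref{Lem:magic}, which with $\bfs_0=0$ already yields $P_{l,\bfr}(x) = (x-\beta)A(x)$ for some $A \in \FF_q[x]$, and then identify $A = P_{l+1,\bfs}$; but pinning down $A$ needs essentially the same manipulation, so the direct route above is cleaner.)
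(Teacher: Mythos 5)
Your proof is correct: the substitution $\bfr_m = \bfs_{m+1}-\beta\bfs_m$, the identification $\Sigma_2 = P_{l+1,\bfs}$ after dropping the vanishing $m=l$ terms, the index shift and peeling in $\Sigma_1$, and the disposal of $\sum_{j=l+1}^{t}b_j\bfs_j$ via annihilation together with $\bfs_0=\cdots=\bfs_l=0$ all check out, including the empty-sum edge case at $j=l+1$. In substance this is the same argument as the paper's --- both proofs rest on exactly the two facts you isolate, namely $\sum_{j=0}^{t}b_j\bfs_j=0$ and the vanishing of $\bfs_0,\ldots,\bfs_l$ (equivalently $\bfr_m=0$ for $m<l$) --- but the organization differs: the paper defines $H(x)=(x-\beta)P_{l+1,\bfs}(x)-P_{l,\bfr}(x)$ and verifies $[x^m]H=0$ coefficient by coefficient, working from the reindexed formulas (1) and (2) and treating the constant term as a separate case, whereas you substitute into the original double-sum form of $P_{l,\bfr}$ and extract the factor $x-\beta$ globally. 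Your forward derivation is arguably cleaner, since it produces the factorization rather than merely confirming it and avoids the three-way bookkeeping of first/last terms that the paper needs for each coefficient; the paper's coefficientwise check has the mild advantage that each individual identity is a short finite computation that can be audited in isolation. Either way the needed hypotheses are used identically, so this is a legitimate, slightly streamlined rederivation rather than a new method.
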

\begin{proof}
Let $H(x) = (x-\beta)P_{l+1,\bfs}(x) - P_{l,\bfr}(x)$ and, for $m \in \mathbb{N}_0$, consider 
$[x^m]H(x)$, the coefficient of $x^m$ in $H(x)$.
We show that it is $0$ and hence $H(x) = 0$.
The constant term of $H(x)$ is
$$
-\beta \ [x^0] P_{l+1,\bfs}(x) - [x^0] P_{l,\bfr}(x).
$$

Using $P_{l,\bfr}(x) = \sum_{j=0}^t b_j\big( \sum_{m=0}^{j-l-1} 
\bfr_{j-1-m}x^{m}\big)$ and $P_{l+1,\bfs}(x) = \sum_{j=0}^t b_j
\big( \sum_{m=0}^{j-l-2} \bfs_{j-1-m}x^{m}\big)$ which are 
equivalent formulae for $P_{l,\bfr}$ and $P_{l+1,\bfs}$ (see results
shown after Lemma~\ref{Lem:betaroot})
this constant term is

\hspace{0.5in} $-\beta\sum_{j=0}^t b_j \bfs_{j-1} - 
\sum_{j=0}^t b_j \bfr_{j-1}$\hspace{0.2in} (use only the term for $m=0$)

\hspace{0.4in}
$ = -\sum_{j=0}^t b_j (\bfr_{j-1} + \beta \bfs_{j-1}) $

\hspace{0.45in}$ = -\sum_{j=0}^t b_j \bfs_{j}$ \hspace{1.35in} (since 
$\bfr = (L-\beta) \bfs$)

\hspace{0.4in}
$ = 0$ \hspace{1.0in}  \hspace{1.02in} (since $f$ annihilates $\bfs$).

For $m \ge 1$ we have:
$$
[x^m]H(x) = [x^{m-1}]P_{l+1,\bfs}(x) - \beta [x^m]P_{l+1,\bfs}(x) -[x^m]P_{l,\bfr}(x).
$$
From Equations~\ref{(1)} and~\ref{(2)}, we get
\begin{itemize}
\item[] $[x^{m-1}]P_{l+1,\bfs}(x) =
\sum_{j=0}^{t-m-l-1} b_{j+m+l+1} \bfs_{j+l+1} = 
 b_{m+l+1} \bfs_{l+1} + 
\sum_{j=1}^{t-m-l-1} b_{j+m+l+1} \bfs_{j+l+1}$

\item[] $[x^m]P_{l+1,\bfs}(x) \hspace{0.13in}  = 
\sum_{j=0}^{t-m-l-2} b_{j+m+l+2} \bfs_{j+l+1} = \hspace{0.3in}
 0  \hspace{0.37in} + 
 \sum_{j=1}^{t-m-l-1} b_{j+m+l+1} \bfs_{j+l}$

\item[] $[x^m]P_{l,\bfr}(x)  \hspace{0.3in} = 
\sum_{j=0}^{t-m-l-1} b_{j+m+l+1} \bfr_{j+l}  
\hspace{0.1in}  =  b_{t} \bfr_{t-m-1}  \hspace{0.18in} +
\sum_{j=0}^{t-m-l-2} b_{j+m+l+1} \bfr_{j+l}$.
\end{itemize}

The first and third of these equations come from removing the
first and last terms, respectively, from the sums, and the second 
equation comes from incrementing the index $j$ by $1$. 
Thus we have

\vspace{0.05in}	

\noindent $[x^m]H(x) =  b_{m+l+1} \bfs_{l+1} - b_t\ \bfr_{t-m-1} + 
\sum_{j=1}^{t-m-l-1} b_{j+m+l+1} (\bfs_{j+l+1} - 
 \beta \bfs_{j+l})
- \sum_{j=0}^{t-m-l-2}b_{j+m+l+1} \bfr_{j+l} $.

\vspace{0.1in}	

\noindent Since, $\bfr = (L-\beta)\bfs,\ \bfs_{j+l+1} - 
\beta \bfs_{j+l} = \bfr_{j+l}$ holds for all $j$ and so

\vspace{0.1in}

$[x^m]H(x) = b_{m+l+1} \bfs_{l+1} - b_t\ \bfr_{t-m-1}
+ \sum_{j=1}^{t-m-l-1}b_{j+m+l+1}\ \bfr_{j+l}
- \sum_{j=0}^{t-m-l-2}b_{j+m+l+1}\ \bfr_{j+l} $

\vspace{0.05in}

$\hspace{0.7in} = 
b_{m+l+1} \bfs_{l+1} - b_t\ \bfr_{t-m-1}
+ (b_{t}\ \bfr_{t-m-1}- b_{m+l+1}\ \bfr_{l})$

$\hspace{0.7in} = 
b_{m+l+1} (\bfs_{l+1} - \bfr_{l})
$

$\hspace{0.7in} = 
b_{m+l+1}\ (\beta \bfs_{l})$

$\hspace{0.7in} = 0$.

\noindent Thus $H(x) = 0$ and the claimed result holds. 
\end{proof}

\subsection{Getting longer and longer runs} \label{Sec:longer}
Let $\beta \in \FF_q$ be such that $(x-\beta,f(x)) = 1$.
Thus, there exists $g \in \FF_q[x]$ such that $g(x) (x-\beta) 
\equiv 1 \pmod{f}$ and so $g(L) = (L-\beta)^{-1}$ is a well-defined
function on $G(f)$.
Let $\bfr^{(0)}$ be a stream that has a run of 
zeroes of length $l$ starting at index $0$.
Inductively define a sequence of streams $\{\bfr^{(i)}\}_{i \ge 1}$ by $$\bfr^{(i)} = (L-\beta)^{-1}\bfr^{(i-1)},\ \ i \ge 1.$$
Equivalently, we have $\bfr^{(i-1)} = (L-\beta)\bfr^{(i)}$ for $i \ge 1$.

Let $P_{l,\bfr^{(0)}}$ be the polynomial defined for the run of $l$ 
zeroes in $\bfr^{(0)}$ and let $z$ be the multiplicity of $\beta$ 
as a root of $P_{l,\bfr^{(0)}}$. From Lemma~\ref{Lem:betaroot}, for 
$i \ge 0$, $\bfr^{(i+1)}$ has a run of zeros of length $l+i+1$ 
as long as $\bfr^{(i)}$ has a run of zeros of length $l+i$ 
starting at index $0$ and $\beta$ is a root of $P_{l+i,\bfr^{(i)}}$.
If $\beta$ is not a root of $P_{l+i,\bfr^{(i)}}$, then $\bfr^{(i+1)}$ 
does not have a run of $l+i+1$ zeroes starting at index $0$.
Repeated application of Lemma~\ref{Lem:betaroot} gives the 
following result.
Here, $\bfr^{(i)}, \bfr^{(i+1)}$ and $l+i$ take the roles of $\bfr, \bfs$ and $i$, respectively, from Section~\ref{Sec:StrOp}

\begin{lemma}\label{Lem:lotazeroes}
For $0 \le i \le z$, $\bfr^{(i)}$ has a run of zeros of length $l+i$ 
starting at index $0$ and $\bfr^{(i+1)}$ is {\it not} $0$ at index $0$.
\end{lemma}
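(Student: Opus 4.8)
The plan is to prove this by induction on $i$, using Lemma~\ref{Lem:betaroot} as the engine and Lemma~\ref{Lem:ltolp} to keep track of how the auxiliary polynomial $P$ evolves as we apply $(L-\beta)^{-1}$ repeatedly. The key invariant to carry through the induction is that, for $0 \le i \le z$, the stream $\bfr^{(i)}$ has a run of zeroes of length \emph{exactly} $l+i$ starting at index $0$, \emph{and} the polynomial $P_{l+i,\bfr^{(i)}}$ has $\beta$ as a root of multiplicity \emph{exactly} $z-i$. The base case $i=0$ is the hypothesis: $\bfr^{(0)}$ has a run of length $l$ at index $0$, and $z$ is \emph{defined} as the multiplicity of $\beta$ as a root of $P_{l,\bfr^{(0)}}$.

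For the inductive step, suppose the invariant holds at level $i$ with $i < z$. Since $z - i \ge 1$, $\beta$ is a root of $P_{l+i,\bfr^{(i)}}$, so by Lemma~\ref{Lem:betaroot} (applied with $\bfr^{(i)}, \bfr^{(i+1)}, l+i$ playing the roles of $\bfr, \bfs, l$, exactly as the excerpt sets up just before the lemma statement) the stream $\bfr^{(i+1)}$ has a run of zeroes of length $l+i+1$ starting at index $0$, and $\bfr^{(i+1)}_0 = 0$. To get that this run has length \emph{exactly} $l+i+1$ when $i+1 < z$ — equivalently that $\bfr^{(i+1)}_{l+i+1} \ne 0$ — I would invoke Lemma~\ref{Lem:ltolp}: since $\bfr^{(i)}$ has a run of at least $l+i+1$ zeroes at index $0$, we have $P_{l+i,\bfr^{(i)}}(x) = (x-\beta) P_{l+i+1,\bfr^{(i+1)}}(x)$. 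Dividing out one factor of $(x-\beta)$ drops the multiplicity by exactly one, so $\beta$ is a root of $P_{l+i+1,\bfr^{(i+1)}}$ of multiplicity exactly $z-(i+1)$, which reestablishes the invariant at level $i+1$. The observation recorded right after Equation~\eqref{(1)} — that $P_{l+i+1,\bfr^{(i+1)}}$ has degree $t-(l+i+1)-1$ with leading coefficient $b_t\,\bfr^{(i+1)}_{l+i+1} = \bfr^{(i+1)}_{l+i+1}$ — shows this polynomial is genuinely nonzero of the stated degree, so the multiplicity statement makes sense and in particular $\bfr^{(i+1)}$ does not have a run of length $l+i+2$; this is what pins down ``exactly $l+i+1$.''

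Finally, for the top of the range: when $i = z$, the invariant gives that $\beta$ is a root of $P_{l+z,\bfr^{(z)}}$ of multiplicity $0$, i.e. $\beta$ is \emph{not} a root of $P_{l+z,\bfr^{(z)}}$. One more application of Lemma~\ref{Lem:betaroot} (with $\bfr^{(z)}, \bfr^{(z+1)}, l+z$ in the roles of $\bfr, \bfs, l$) then says $\bfr^{(z+1)}$ does \emph{not} have a run of length $l+z+1$ at index $0$, equivalently $\bfr^{(z+1)}_0 \ne 0$, which is the second assertion of the lemma. Combined with the run-length statement for $\bfr^{(z)}$ from the invariant, this gives exactly the claim.

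I expect the main obstacle to be purely bookkeeping rather than conceptual: correctly aligning the shifted indices so that Lemmas~\ref{Lem:betaroot} and~\ref{Lem:ltolp}, which are stated for a generic pair $(\bfr,\bfs)$ with run length $l$, are applied with the right substitutions $\bfr \leftarrow \bfr^{(i)}$, $\bfs \leftarrow \bfr^{(i+1)}$, $l \leftarrow l+i$ at each stage, and checking that the hypotheses of Lemma~\ref{Lem:ltolp} (namely that $\bfr^{(i)} = (L-\beta)\bfr^{(i+1)}$ and that $\bfr^{(i+1)}$ has a run of at least $l+i+1$ zeroes) are actually in force before it is used. The relation $\bfr^{(i-1)} = (L-\beta)\bfr^{(i)}$ is built into the definition of the sequence, and the run-length hypothesis is exactly what the previous inductive step supplies, so once the indexing is set up carefully the argument closes with no real computation.
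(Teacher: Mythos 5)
Your proof is correct and follows exactly the route the paper intends: the paper only sketches this lemma as ``repeated application of Lemma~\ref{Lem:betaroot}'', and your induction --- with Lemma~\ref{Lem:ltolp} stripping one factor of $(x-\beta)$ per step so that the multiplicity of $\beta$ in $P_{l+i,\bfr^{(i)}}$ drops from $z$ to $0$ --- is precisely the argument being gestured at, written out in full. Your reading of the second clause as a statement about $\bfr^{(z+1)}$ (rather than about every $\bfr^{(i+1)}$ with $0\le i\le z$, which would be false for $i<z$) is also the intended one.
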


\subsection{Starting at an index $\ne 0$}\label{Sec:non-zero}
Next we show that, for the results given in the previous sections, 
the requirement that the run of zeroes occurs at index $0$ is not 
essential. Specifically, suppose that a stream $\bfr$ has a run of $l$ 
zeroes starting at $n \in \mathbb{Z}$ and that $\bfr = (L-\beta)\bfs$. Then, 
$L^n\bfr = (L-\beta)L^n\bfs$; $L^n\bfr$ has a run of $l$ zeroes 
starting at index $0$ and so the results given in the previous 
section can be applied to $L^n\bfr$. Specifically, we would have:
\begin{itemize}
\item
$(\bfs_{n}, \bfs_{n+1},  \bfs_{n+2}, \ldots, \bfs_{n+l}) = 
(\bfs_{n}, \beta \bfs_{n}, \beta^2 \bfs_{n}, \ldots, \beta^l\bfs_{n})$;
\item
$\bfs_{n} = 0$ if and only if $\bfs$ has a run of $l+1$ zeroes 
starting at index $n$;
\item
if $P_{l,L^n\bfr}(x) = \sum_{j=l+1}^{t}b_j (\sum_{m=l}^{j-1}
\bfr_{n+m}x^{j-1-m})$; then, $\beta$ is a root of 
$\bfs_{n}f + P_{l,L^n\bfr}$;
\item
$\beta$ is root of $P_{l,L^n\bfr}$ if and only if $\bfs_n = 0$ 
if and only if $\bfs$ has a run of zeroes of length $l+1$ 
starting at index $n$;
\item
suppose $\bfs$ has a run of $l+1$ zeros starting at index $n$;
then $P_{l,L^n\bfr}(x) = (x-\beta)P_{l+1,L^n\bfs}(x)$.
\end{itemize}
These results will be used in the proof of the correctness of 
our new construction.

\section{The Generalized Runs (GR) construction of OOAs} \label{Sec:GR}
Let $f \in \FF_q[x]$ be a monic polynomial with non-zero constant 
term. For a stream ${\bf s} \in G(f)$ with period $\rho$ we define 
its {\em orbit} to be the set $\{{\bf s}, L{\bf s}, L^{2}{\bf s}, 
\ldots, L^{\rho-1}{\bf s} \}$ which is the set of the $\rho$ 
different sequences of length $\rho$ obtained by applying the 
left shift operator $L$ to ${\bf s} $ $i$ times, for 
$i=0, \ldots, \rho - 1$. The orbits partition $G(f)$ into $k$ 
orbits $C_1, C_2, \ldots, C_k$. All streams in a given orbit $C_i$ 
have the same period, denoted $\rho_i$. For each $C_i$ we choose 
one stream $\bfa^{(i)} \in C_i$, and call this the {\em base} for $C_i$.
Therefore, the elements of $C_i$ are all the shifts of the base 
$\bfa^{(i)}$.

For each $\beta \in \FF_q$ and $1 \le i \le k$, consider 
$\bfb = (L-\beta)\bfa^{(i)}$. Since $\bfb$ is annihilated by $f$, 
$\bfb \in G(f)$ and so lies in some orbit. Since any two 
elements of a given orbit are shifts of each other, the orbit 
that $\bfb$ lies in is independent of the choice of the base
$\bfa^{(i)}$ within the orbit $C_i$.

\subsection{RUNS construction from \cite{Castoldi}}\label{Sec:Runs}
In \cite{Castoldi} it was observed that if $f \in \FF_q[x]$ is a 
degree $t$ primitive polynomial and $\bfa$ is a fixed non-zero 
stream in $G(f)$ then, for each $\beta \in \FF_q^*$, there exists 
$k_{\beta} \in [1,q^t-1]$ such that, for all $n \in \ZZ$, the 
following holds:

\begin{equation} \label{(3)}
\bfa_{n+1} - \beta\bfa_{n} = \bfa_{n-k_{\beta}}.
\end{equation}

From this observation, an $OOA(q^t;t,q+1,t,q)$ over the alphabet 
$\FF_q$ was constructed as follows: for each $\beta \in \FF_q^*$, 
a block with $t$ columns was constructed where the first column 
is $L^{k_{\beta}} \bfa$ and each subsequent column is a left 
shift by $k_{\beta}$ places of the previous column.

When $f$ is primitive there are exactly two orbits, one of which 
is the trivial orbit containing only the all zeroes stream and 
the other is the orbit containing $\bfa$. Equation~(\ref{(3)}) is 
equivalent to stating that $(L-\beta)\bfa = L^{-k_{\beta}} \bfa$ 
and so $(L-\beta)$ maps the non-trivial orbit to itself (and the 
trivial orbit to itself).

Since $L^{k_{\beta}} \bfa = (L-\beta)^{-1}\bfa$, a left shift of 
$k_{\beta}$ positions is equivalent to applying $(L-\beta)^{-1}$ 
to the stream. Hence, the $j$th column in the block is the first 
$q^t-1$ elements of
$$ (L-\beta)^{-j}\bfa. $$
Two additional special blocks are constructed and these $q+1$ 
blocks in total, each with $t$ columns per block, yield the 
$OOA(q^t;t,q+1,t,q)$.

\subsection{Generalized RUNS construction}\label{Sec:GR1}
By using the analysis in Section~\ref{Sec:Runs} to describe the 
construction in \cite{Castoldi}, we generalize this construction 
to include non-primitive polynomials satisfying some simple 
properties. For these polynomials, an $OOA(q^t;t,\gamma+1,t,q)$ 
over the alphabet $\FF_q$ is constructed, where
$\gamma$ 
is the number of $\beta \in \FF_q$ such that $(x-\beta, f(x)) = 1$.
Equivalently,  $\gamma$ is the number 
of $\beta \in \FF_q$ that are not roots of $f$.
When $f$ is 
primitive, $\gamma = q$ and the resulting $OOA(q^t;t,q+1,t,q)$ 
is essentially the one produced in \cite{Castoldi}.  

For $f \in \FF_q[x]$, a monic polynomial of degree $t$ with non-zero constant term, 
let $\Gamma_f  = \{\beta_1, \beta_2, \ldots, \beta_{\gamma}\} \subseteq 
\FF_q$ be the set of those $\beta$ such that $(x-\beta,f(x)) = 1$.
For each such $\beta$, $(x-\beta)^{-1} \pmod{f}$ exists and so 
$(L-\beta)^{-1}$ is a well-defined function on $G(f)$. 
 
For each $\beta_k \in \Gamma_f$, and each orbit $C_i$, $1 \le i \le k$, 
we construct a $\rho_i \by t$ matrix $M(i,\beta_k)$ with $j$th column defined by the first 
$\rho_i$ elements of
$$ (L-\beta_k)^{-j} \bfa^{(i)}. $$

For each orbit $C_i$, we also have a special $\rho_i \by t$ matrix 
$M(i,\infty)$ with $j$th column 
defined by the first $\rho_i$ elements of
$$ L^{j-1}\bfa^{(i)}. $$

Define a matrix $M(f)$ consisting of the submatrices $M(i,\beta_j), 
i \in \{1,2,\ldots, s\}, \beta_j \in \{\infty\} \cup \Gamma_f$ 
arranged as below, where $\gamma = |\Gamma_f|$.
\[
\begin{tikzpicture}[mymatrixenv]
\matrix[mymatrix] (m)  {
M(1,\infty) & M(1,\beta_1) & M(1,\beta_2) & \ldots & M(1,\beta_{\gamma})  \\
M(2,\infty) & M(2,\beta_1) & M(2,\beta_2) & \ldots & M(2,\beta_{\gamma})  \\
\ldots      & \ldots       & \ldots       & \ldots & \ldots \\
M(s,\infty) & M(s,\beta_1) & M(s,\beta_2)  & & M(s,\beta_{\gamma}) \\
};
\mymatrixbraceright{1}{1}{$C_1$}
\mymatrixbraceright{2}{2}{$C_2$}\\
\mymatrixbraceright{4}{4}{$C_s$}
\mymatrixbracebottom{1}{1}{$Bl_M(0)$}
\mymatrixbracebottom{2}{2}{$Bl_M(1)$}
\mymatrixbracebottom{3}{3}{$Bl_M(2)$}
\mymatrixbracebottom{5}{5}{$Bl_M(\gamma)$}
\end{tikzpicture}
\]

In the next section, we prove that $M(f)$ is an $OOA(q^t;t,\gamma+1,t,q)$.
In Section~\ref{Sec:Runs} we noted that two additional blocks were 
included in the RUNS construction. The Generalized RUNS construction 
shows that one of these blocks corresponds to the set of matrices 
$\{M(i,\infty)\ |\ 1 \le i \le s\}$ and the second corresponds to the 
set of matrices $\{M(i,0)\ |\ 1 \le i \le s\}$ (we note that 
$0 \in \Gamma_f$ always holds).

\section{Correctness of the construction} \label{Sec:proofOOA}
The following result follows from Theorem $2$ of \cite{Raaphorst}.
\begin{lemma}\label{Lem:raaph}
The following are equivalent:
\begin{enumerate}
\item[(1)] A set of $t$ columns $\{i_{1},\ldots,i_{t}\}$ is covered in $M(f)$.
\item[(2)] There is no row $R = (r_0,r_1,\ldots, r_{t(\gamma+1)-1})$ other than the all-zero row of $M(f)$ such that $r_{i_{1}}=\cdots=r_{i_{t}}=0$.
\end{enumerate}
\end{lemma}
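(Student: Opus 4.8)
The plan is to invoke the standard duality between covering properties of a matrix and the (non)existence of low-weight "annihilating" vectors, as packaged in Theorem~$2$ of~\cite{Raaphorst}. The key observation is that the rows of $M(f)$ form an $\FF_q$-linear subspace: each stream in $G(f)$ is determined by its first $t$ coordinates, the maps $(L-\beta_k)^{-j}$ and $L^{j-1}$ are $\FF_q$-linear on $G(f)$, and so the row indexed by a base stream $\bfa^{(i)}$ depends linearly on $\bfa^{(i)}$; more precisely, reading off a full row of $M(f)$ amounts to reading off finitely many coordinates of finitely many streams, each obtained from a common underlying stream by a fixed linear operator, hence the whole row map is linear. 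Thus $M(f)$ is a generator matrix of a linear code (equivalently, the rows span a linear space and we may think of $M(f)^{\top}$ as an $\FF_q$-linear map from the row space to $\FF_q^{t(\gamma+1)}$).

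First I would make the linearity explicit: fix the ambient space to be $G(f)$ (or $\FF_q^t$ via the initial-state identification), observe that for each pair $(i,\beta_j)$ the submatrix $M(i,\beta_j)$ is obtained by applying fixed linear operators to the base stream and truncating, and conclude that the map sending a stream to its corresponding row of $M(f)$ is $\FF_q$-linear, with the all-zero stream mapping to the all-zero row. Second, I would apply Theorem~$2$ of~\cite{Raaphorst}, which states (for an array whose rows form a linear code) that a set of $t$ columns is covered if and only if the corresponding $t$ coordinate functionals are linearly independent on the code — equivalently, if and only if no nonzero codeword vanishes on all $t$ of those coordinates. Translating "codeword" back to "row of $M(f)$" gives exactly the stated equivalence of (1) and (2).

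The only real content is checking that the hypotheses of~\cite[Theorem~$2$]{Raaphorst} are met, i.e.\ that the rows of $M(f)$ genuinely form a linear space and that every left-justified (indeed every) set of columns is being tested against the same code — this is where the structure of the construction matters, since $M(f)$ is assembled from blocks $M(i,\beta_j)$ across different orbits $C_i$ of possibly different periods $\rho_i$, and one must confirm that stacking these blocks vertically still yields a linear row space. This is straightforward because each row of the stacked matrix is the image of a single element of $\bigoplus_i C_i \subseteq \bigoplus_i G(f)$ under a block-diagonal linear map, but it is the step I would state most carefully.

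The main obstacle, such as it is, is purely expository: getting the indexing right (the columns of $M(f)$ are labelled $r_0,\dots,r_{t(\gamma+1)-1}$ running over blocks $Bl_M(0),\dots,Bl_M(\gamma)$, with $Bl_M(0)$ the $\infty$-block), and making sure the cited theorem is quoted in the exact form needed. There is no hard estimate or clever argument here — Lemma~\ref{Lem:raaph} is a bookkeeping reduction whose purpose is to convert the combinatorial target ("$M(f)$ is an OOA") into the algebraic statement about runs of zeroes in shifted streams that Lemmas~\ref{Lem:betapow}–\ref{Lem:lotazeroes} were built to handle.
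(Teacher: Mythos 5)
Your proposal is correct and matches the paper's treatment, which simply states that the lemma follows from Theorem~$2$ of \cite{Raaphorst} without further argument. Your additional verification that the rows of $M(f)$ form an $\FF_q$-linear space (so that the cited coding-theoretic equivalence applies) is exactly the implicit content of the paper's one-line justification, spelled out.
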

One thing to note about the rows of the $M(i,\infty)$ submatrices is that their 
$s$th row, with rows indexed $0 \le s \le \rho_i-1$, is the first 
$t$ entries of $L^{s}\bfa_i$. Equivalently these are the entries 
$s,s+1, \ldots, s+t-1$ of $\bfa_i$. This is used in Case 1 in the 
following proof showing that the construction is valid.

\begin{theorem}\label{Th:main}
For $f \in \FF_q[x]$, a monic polynomial of degree $t$ with non-zero constant term, the matrix $M(f)$, as defined in Section~\ref{Sec:GR}, is an $OOA(q^t;t,\gamma+1,t,q)$.
\end{theorem}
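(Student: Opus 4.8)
The plan is to invoke Lemma~\ref{Lem:raaph}, which turns ``$\Omega$ is covered'' into ``no nonzero row of $M(f)$ vanishes on $\Omega$'', and to describe the rows of $M(f)$ explicitly. Since the orbits partition $G(f)$, the rows of $M(f)$ are naturally indexed by the streams $\bfu\in G(f)$ (the $\sigma$th row of the block attached to orbit $C_i$ corresponds to $\bfu=L^{\sigma}\bfa^{(i)}$), giving a bijection between $G(f)$ and the $q^t$ rows of $M(f)$. A one-line computation using that $L$ commutes with each $(L-\beta_k)^{-1}$ shows that the row indexed by $\bfu$ has entries $(\bfu_0,\ldots,\bfu_{t-1})$ in the block $Bl_M(0)$ attached to $\infty$, and entries $\big(((L-\beta_k)^{-1}\bfu)_0,\ldots,((L-\beta_k)^{-t}\bfu)_0\big)$ in the block $Bl_M(k)$ attached to $\beta_k$. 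Every coordinate is $\FF_q$-linear in $\bfu$ and the block $Bl_M(0)$ alone determines $\bfu$, so it suffices to prove: for every left-justified set $\Omega$ of $t$ columns, the only $\bfu\in G(f)$ whose row vanishes on $\Omega$ is $\bfu=0$.

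Fix such an $\Omega$; being left-justified it consists of the first $l_0$ columns of $Bl_M(0)$ together with, for $1\le k\le\gamma$, the first $l_k$ columns of $Bl_M(k)$, where $l_0+l_1+\cdots+l_\gamma=t$. By the description above, the row indexed by $\bfu$ vanishes on $\Omega$ exactly when $\bfu$ has a run of at least $l_0$ zeroes at index $0$ and $((L-\beta_k)^{-j}\bfu)_0=0$ for all $1\le j\le l_k$ and all $k$. Assume $\bfu\ne0$. Since $\bfu$ is periodic and annihilated by $f$, it contains no run of $t$ consecutive zeroes, so its initial run of zeroes has some exact length $L_0$ with $0\le L_0\le t-1$ and $\bfu_{L_0}\ne0$; the $Bl_M(0)$ condition then says precisely that $l_0\le L_0$.

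The heart of the proof is to reinterpret, for each $k$ with $l_k\ge1$, the conditions coming from $Bl_M(k)$ in terms of root multiplicities of $P_{L_0,\bfu}$. Put $\bfr^{(0)}=\bfu$ and $\bfr^{(i)}=(L-\beta_k)^{-i}\bfu$, and let $z_k$ be the multiplicity of $\beta_k$ as a root of $P_{L_0,\bfu}$. By Lemma~\ref{Lem:lotazeroes} (applied with $\beta=\beta_k$), $\bfr^{(i)}$ has a run of at least $L_0+i$ zeroes at index $0$ for $0\le i\le z_k$, while $\bfr^{(z_k+1)}$ is nonzero at index $0$; consequently $((L-\beta_k)^{-j}\bfu)_0=0$ for every $1\le j\le l_k$ if and only if $z_k\ge l_k$, i.e.\ if and only if $(x-\beta_k)^{l_k}$ divides $P_{L_0,\bfu}$. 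I expect this to be the delicate step, since it is where the inductive behaviour of the polynomials $P_{l,\cdot}$ from Lemmas~\ref{Lem:magic}--\ref{Lem:ltolp} must be packaged cleanly; the ``Case~1'' remark before the theorem suggests the authors instead peel off the subcase $l_0=t$ separately, but the formulation above treats every $\Omega$ at once.

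The proof then closes by a degree count. Since $\beta_1,\ldots,\beta_\gamma$ are pairwise distinct, the divisibilities from $Bl_M(1),\ldots,Bl_M(\gamma)$ combine to show that $\prod_{k=1}^{\gamma}(x-\beta_k)^{l_k}$, a polynomial of degree $l_1+\cdots+l_\gamma=t-l_0$, divides $P_{L_0,\bfu}$. But $P_{L_0,\bfu}$ is nonzero of degree exactly $t-L_0-1$, its leading coefficient being $b_t\bfu_{L_0}=\bfu_{L_0}\ne0$. Hence $t-L_0-1\ge t-l_0$, that is $L_0\le l_0-1$, contradicting $l_0\le L_0$. So no nonzero stream vanishes on $\Omega$, and by Lemma~\ref{Lem:raaph} every left-justified set of $t$ columns of $M(f)$ is covered; since $M(f)$ has $q^t$ rows, $\gamma+1$ blocks of $t$ columns, and entries in $\FF_q$, it is an $OOA(q^t;t,\gamma+1,t,q)$.
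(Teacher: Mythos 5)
Your proof is correct and follows essentially the same route as the paper: reduce via Lemma~\ref{Lem:raaph} to showing that no nonzero stream's row vanishes on a left-justified $t$-set, bound each $l_k$ by the multiplicity $z_k$ of $\beta_k$ in the run polynomial using Lemma~\ref{Lem:lotazeroes}, and conclude by comparing $\sum_{k\ge 1} l_k = t-l_0$ with $\deg P_{L_0,\bfu} = t-L_0-1$. The only difference is organizational: by indexing rows by streams $\bfu=L^{\sigma}\bfa^{(i)}$ and taking $L_0$ to be the exact length of the zero-prefix of $\bfu$ at index $0$ (which is legitimate, since the proofs of Lemmas~\ref{Lem:magic}--\ref{Lem:lotazeroes} never use the condition $\bfu_{-1}\ne 0$ in the definition of a run), you fold the paper's three cases ($l_0>0$; $l_0=0,\,s_0\ne 0$; $l_0=0,\,s_0=0$) into a single uniform degree count.
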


\begin{proof}
	
With needed changes, the proof follows the one given in \cite{Castoldi} for primitive $f$.

Let $L$ be a left-justified set of $t$ columns in $M(f)$ and let $B$ 
be the $q^t \by t$ array induced by the columns of $L$. Partition 
$L$ into sets $L_0, L_1, \ldots, L_{\gamma}$ where the columns of 
$L_i$ are in the $i$th block $Bl_M(i)$ and let $l_i = |L_i|, 0 
\le i \le \gamma$. Thus, $0 \le l_i \le t$ and $t = \sum_{i=0}^{t}l_i$.
This implies $t - l_0 = \sum_{i=1}^{t}l_i$, which is used a 
number of times in the proof.

Let $R = (r_0, r_1, \ldots, r_{t-1})$ be a row of $B$. This row 
corresponds to some orbit $C_i$ which, after a re-labeling of the 
orbits, we can assume to be $C_1$ having base $\bfa = \bfa^{(1)}$. This row 
corresponds to a row $S = (s_0, s_1, \ldots, s_{t(\gamma+ 1) - 1})$ 
of the full matrix $M(f)$.

Suppose that $S$ is not the all zeroes row. Hence, the 
 $\bfa$ 
is not the all zeroes stream. We claim that $R$ is not all zeroes.
By way of contradiction, suppose that it is all zeroes and consider
$3$ cases: Case 1: $l_0 > 0$, Case 2: $l_0 = 0$ and $s_0 \ne 0 $ 
and Case 3: $l_0 = 0$ and $s_0=0$.

In all cases, we find a run of zeroes of length $l\ge 0$ 
starting at some index $n$ and create the polynomial $P_{l}$ (we omit the $\bfa$ from the subscript of $P$ as $\bfa$ is fixed) of 
degree $t-l-1$ for this run. Then, letting $z_k$ be the multiplicity 
of $\beta_k$ as a root of $P_{l}$, we use that fact that 
$\sum_{k=1}^{\gamma}z_k \le \deg(P_l) = t - l - 1$ to derive the 
required contradiction.

Case 1: ($l_0 > 0$) :
Then, $(r_\omega,r_{\omega+1},\ldots,r_{\omega + l_0-1}) = (s_0,s_1,\ldots, s_{l_0-1})$ and 
from what we noted earlier about $M(1,\infty)$, this $l_0$-tuple is 
$(\bfa_{\omega},\bfa_{\omega+1}, \ldots, \bfa_{\omega+l_0-1})$, the first $l_0$ elements 
of $L^\omega\bfa$ where $\omega$ is the row index of $R$ in the matrix 
$M(1,\infty)$ (with indexing starting at $0$).

If $r_j \ne 0$ for some $0 \le j \le l_0-1$, we are done. So suppose 
that $r_0 = r_1 = \cdots = r_{l_0-1} = 0$. Thus, $\bfa$ has a run of 
zeroes of length $l' \ge l_0$ starting at index $n \le \omega$ and ending 
at index $n+l'-1$. Let $P_{l'}$ be our polynomial of degree $t-l'-1$ 
for this run of $l'$ zeroes starting at index $n$.

By Lemma~\ref{Lem:lotazeroes}, the $z_k$ entries in matrix 
$M(1,\beta_k)$ corresponding to the row $S$ are all zero while the 
$(z_k+1)$st entry is not. Thus, since $R$ is the all zeros row, 
we must have $l_k \le z_k$ for $1 \le k \le \gamma$, giving 
$\sum_{k=1}^{\gamma}l_k \le \sum_{k=1}^{\gamma}z_k$. Hence, 
$t - l_0 = \sum_{k=1}^{\gamma}l_k \le  \sum_{k=1}^{\gamma}z_k 
\le t - l' - 1$. This gives, $l_0 \ge l'+1$, a contradiction.

Case 2 ($l_0 = 0, s_0 \ne 0$) :
Then $\bfa$ contains a run of $l_0= 0$ zeroes starting at some index 
$n$. Let $P_{0}$ be our polynomial of degree $t-l_0-1 = t-1$ 
for this run of $0$ zeroes starting at index $n$.

By Lemma~\ref{Lem:lotazeroes}, the $z_k$ entries in matrix 
$M(1,\beta_k)$ corresponding to the row $S$ are all zero and the 
$(z_k+1)$st entry is not. Since $R$ is the all zeroes row, as 
before, we have $l_k \le z_k$ for all $k$.
 
Hence $\sum_{k=1}^{\gamma}l_k \le \sum_{k=1}^{\gamma}z_k$ and, 
since $t = \sum_{k=1}^{\gamma}l_k$, we have 
$t \le  \sum_{k=1}^{\gamma}z_k \le t - 1$, a contradiction.

Case 3 ($l_0 = 0, s_0 = 0$):
Since $S$ is not the all zeroes row, there exists a smallest 
$l \ge 1$ such that $s_0 = s_1 = \cdots = s_{l-1} = 0$, but 
$s_l \ne 0$. Thus, $\bfa$ has a run of $l' \ge l$ zeroes 
starting at some index $n \le 0$ and ending at index $n+l'- 1$.
Let $P_{l'}$ be our polynomial of degree $t-l'-1$ for this 
run of $l'$ zeroes starting at index $n$.

By Lemma~\ref{Lem:lotazeroes}, the $z_k$ entries in matrix 
$M(1,\beta_k)$ corresponding to the row $S$ are all zero and 
the $(z_k+1)$st entry is not. Since $R$ is the all zeroes row, 
we must have $l_k \le z_k$ for $1 \le k \le \gamma$. As in Case 2, 
we get $t = \sum_{k=1}^{\gamma}l_k \le  \sum_{k=1}^{\gamma}z_k 
\le t - l' - 1$ which gives $l' \le -1$, a  contradiction.

Thus all cases lead to a contradiction and so $R$ is not the 
all zeroes row and hence $L$ is covered by Lemma~\ref{Lem:raaph}.
Thus, $M(f)$ is an $OOA(q^t;t,\gamma+1,t,q)$ as claimed.
\end{proof}

As a corollary to this theorem, we have one of the main results
of~\cite{Castoldi}.

\begin{corollary}\label{Cor:prim}
An $OOA(q^t;t,q+1,t,q)$ over the alphabet $\FF_q$ can be derived 
from the LFSR defined by a degree $t \ge 2$ primitive polynomial 
$f \in \FF_q[x]$.
\end{corollary}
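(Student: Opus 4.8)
The plan is to obtain the corollary as a direct specialization of Theorem~\ref{Th:main}; the only work is to verify that the hypotheses of that theorem hold for a primitive polynomial and that the parameter $\gamma$ equals $q$ in this case.

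First I would recall that a primitive polynomial $f \in \FF_q[x]$ of degree $t \ge 2$ is, in particular, irreducible over $\FF_q$ and is not equal to $x$; hence its constant term $b_0$ is nonzero. Thus $f$ is a monic polynomial of degree $t$ with non-zero constant term, so Theorem~\ref{Th:main} applies and produces an $OOA(q^t;t,\gamma+1,t,q)$, where $\gamma = |\Gamma_f|$ is the number of $\beta \in \FF_q$ with $(x-\beta,f(x))=1$, equivalently the number of $\beta \in \FF_q$ that are \emph{not} roots of $f$.

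Next I would argue that $\gamma = q$, i.e.\ that $f$ has no root in $\FF_q$. Since $f$ is irreducible of degree $t$, any root $\alpha$ of $f$ in an algebraic closure satisfies $[\FF_q(\alpha):\FF_q] = t \ge 2$, so $\alpha \notin \FF_q$. Hence no $\beta \in \FF_q$ is a root of $f$, every $\beta \in \FF_q$ lies in $\Gamma_f$, and $\gamma = q$. Substituting into the conclusion of Theorem~\ref{Th:main} yields an $OOA(q^t;t,q+1,t,q)$ over $\FF_q$, as claimed.

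There is no real obstacle here: the mathematical content is entirely contained in Theorem~\ref{Th:main}, and the corollary reduces to the elementary fact that an irreducible polynomial of degree at least $2$ has no linear factors over its base field. I would close with a sentence noting that, by the discussion in Section~\ref{Sec:GR1}, this $OOA$ coincides (up to the choices of orbit bases and the ordering of the blocks) with the array constructed in~\cite{Castoldi}, so the corollary indeed recovers that paper's main construction.
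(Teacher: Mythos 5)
Your proof is correct and follows essentially the same route as the paper: both reduce the corollary to Theorem~\ref{Th:main} by observing that a primitive (hence irreducible, degree $\ge 2$) polynomial has no roots in $\FF_q$, so $\Gamma_f = \FF_q$ and $\gamma = q$. Your additional check that the constant term is nonzero is a welcome bit of extra care, but the substance is identical.
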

\begin{proof}
Since $f$ is primitive, $f$ has no roots in $\FF_q$ and so 
$(x-\beta,f(x)) = 1$ for all $\beta \in \FF_q$. Thus, 
$\Gamma_f = \FF_q$ and Theorem~\ref{Th:main} provides the 
required $OOA(q^t;t,q+1,t,q)$.
\end{proof}

\begin{example}
Let $f(x) = x^3 + x^2 + x + 1 = (x + 1)^3 \in \FF_2[x]$.
Then $G(f)$ is partitioned into $4$ orbits, as shown in 
the following table along with its (arbitrarily chosen) base.
\begin{small}
$$
\begin{array}{|c|l|c|} \hline
j & \hspace{0.1in}  a_j & period \\ \hline
0 & 0011\ldots          & 4 \\ \hline
1 & 01\ldots            & 2 \\ \hline
2 & 0\ldots             & 1 \\ \hline
3 & 1\ldots             & 1 \\ \hline
\end{array}
$$
\end{small}

Hence, $\Gamma_f = \{0\}$ and our OOA is the following $8 \by 6$ 
matrix, partitioned into the $8$ submatrices $M(i,\beta)$ for 
$(i,\beta) \in \{0,1,2,3\} \by \{\infty, 0\}$.

\vspace{0.1in}
\begin{tiny}
$$	
M(f)= 
\begin{array}{|c|c|} \hline  
M(0,\infty) & M(0,0) \\ \hline
M(1,\infty) & M(1,0) \\ \hline
M(2,\infty) & M(2,0) \\ \hline
M(3,\infty) & M(3,0) \\ \hline
\end{array}
=
\\
\begin{array}{|c|c|} \hline 
\begin{matrix}
0 & 0 & 1  \\
0 & 1 & 1 \\
1 & 1 & 0 \\
1 & 0 & 0 \\
\end{matrix}
&
\begin{matrix}
1 & 1 & 0 \\
0 & 1 & 1  \\
0 & 0 & 1 \\
1 & 0 & 0 \\
\end{matrix}\\ \hline

\begin{matrix}
0 & 1 & 0 \\
1 & 0 & 1 \\
\end{matrix}
&
\begin{matrix}
1 & 0 & 1 \\
0 & 1 & 0  \\
\end{matrix}\\ \hline

\begin{matrix}
0 & 0 & 0 \\
\end{matrix}
&
\begin{matrix}
0 & 0 & 0 \\
\end{matrix}\\ \hline
	
\begin{matrix}
1 & 1 & 1 \\
\end{matrix}
&
\begin{matrix}
1 & 1 & 1 \\
\end{matrix}\\ \hline	
\end{array}\ .
$$
\end{tiny}
\vspace{0.2in}
By Theorem~\ref{Th:main}, this is an $OOA(8;3,2,3,2)$
and is the one shown at the 
start of the paper.
\end{example}

\begin{example}
Let	$f(x) = x^4 + x^3 + 1 \in \FF_2[x]$.
Since $f$ is primitive, $G(f)$ is partitioned into $2$ orbits.
The following table shows their (arbitrarily chosen) bases.

\begin{small}	
$$
\begin{array}{|c|l|c|} \hline
j & \hspace{.6in}  a_j    & period \\ \hline
0 & 000111101011001\ldots & 15     \\ \hline
1 & 0\ldots               &  1      \\ \hline
\end{array}
$$
\end{small}		

\vspace{0.1in}
Hence, $\Gamma_f = \FF_2$ and our OOA is the following $16 \by 12$ 
matrix which is partitioned into the $6$ submatrices $M(i,\beta)$, 
one for each for $(i,\beta) \in \{0,1\} \by (\Gamma_f \cup \{\infty\})$.
	
\vspace{0.1in}
\begin{tiny}	
$$	
M(f)= 
\begin{array}{|c|c|c|} \hline  
M(0,\infty) & M(0,0) & M(0,1) \\ \hline
M(1,\infty) & M(1,0) & M(1,1) \\ \hline
\end{array}
= \begin{array}{|c|c|c|} \hline 

\begin{matrix}
0 & 0 & 0 & 1 \\
0 & 0 & 1 & 1 \\
0 & 1 & 1 & 1 \\
1 & 1 & 1 & 1 \\
1 & 1 & 1 & 0 \\
1 & 1 & 0 & 1 \\
1 & 0 & 1 & 0 \\
0 & 1 & 0 & 1 \\
1 & 0 & 1 & 1 \\
0 & 1 & 1 & 0 \\
1 & 1 & 0 & 0 \\
1 & 0 & 0 & 1 \\
0 & 0 & 1 & 0\\
0 & 1 & 0 & 0 \\
1 & 0 & 0 & 0 \\
\end{matrix}
&
\begin{matrix}
1 & 0 & 0 & 1 \\
0 & 1 & 0 & 0 \\
0 & 0 & 1 & 0 \\
0 & 0 & 0 & 1 \\
1 & 0 & 0 & 0 \\
1 & 1 & 0 & 0 \\
1 & 1 & 1 & 0 \\
1 & 1 & 1 & 1 \\
0 & 1 & 1 & 1 \\
1 & 0 & 1 & 1 \\
0 & 1 & 0 & 1 \\
1 & 0 & 1 & 0 \\
1 & 1 & 0 & 1 \\
0 & 1 & 1 & 0 \\
0 & 0 & 1 & 1 \\
\end{matrix}
	
&
	
\begin{matrix}
1 & 1 & 0 & 0 \\
1 & 0 & 1 & 0 \\
1 & 1 & 1 & 1 \\
1 & 0 & 0 & 0 \\
0 & 1 & 0 & 0 \\
1 & 1 & 1 & 0 \\
0 & 0 & 0 & 1 \\
1 & 0 & 0 & 1 \\
1 & 1 & 0 & 1 \\
0 & 0 & 1 & 1 \\
0 & 0 & 1 & 0 \\
1 & 0 & 1 & 1 \\
0 & 1 & 1 & 0 \\
0 & 1 & 0 & 1 \\
0 & 1 & 1 & 1 \\
\end{matrix}

\\ \hline
	
\begin{matrix}
0& 0 & 0 & 0 \\
\end{matrix}

&
	
\begin{matrix}
0 & 0 & 0 & 0 \\
\end{matrix}	
	
&
	
\begin{matrix}
0 & 0 & 0 & 0 \\
\end{matrix}

\\ \hline
\end{array}\ .
$$
\end{tiny}	

By Theorem~\ref{Th:main}, this is an $OOA(16;4,3,4,2)$ and is 
essentially the one that would be constructed by the RUNS 
construction of \cite{Castoldi}.
\end{example}

The OOAs constructed in Theorem~\ref{Th:main} correspond, by 
Theorem~\ref{tms-net}, to digital $(0,t,\gamma+1)$ nets in base $q$.

\section{Comparison of constructions}\label{Sec:ff2}
In this section we provide some data about the OOAs produced by 
our new construction. First, in Table~\ref{Comp2}, when the GR 
construction provides an OOA with $t(q+1)$ columns, as the RUNS 
construction and the Rosenbloom/Tsfasman/Skriganov (RTS) 
construction do, we compare the percentage of $t$-sets covered.  Following that we compute and compare the star-discrepancy of some $(t,m,s)$-nets from the GR and RTS constructions.

In this table, $\#$ is the number of OOAs produced by the method,
R$_{min}$, R$_{max}$, R$_{avg}$, R$_{SD}$, GR$_{min}$, GR$_{max}$, 
GR$_{avg}$ and GR$_{SD}$ give the minimum, maximum and average percentage
coverage as well as the standard deviation for the RUNS and GR
constructions, respectively.
For the RUNS construction, $\#$ is the number of primitive polynomial in $\FF_2[x]$ of degree $t$ and for the GR construction, $\#$ is the number of polynomials of degree $t$ with no roots in $\FF_2$.
The RTS construction produces exactly one OOA and the corresponding entry in the table is the percentage of $t$-sets covered for this OOA.
Boldfaced entries in the GR$_{max}$ column indicate 
when a non-primitive polynomial gives an OOA with coverage that 
is at least as good as when only primitive polynomials are used
and boldfaced entries in the GR$_{SD}$ column indicate then 
the standard deviation of the GR results is smaller than
the standard deviation of the R results. 

Secondly, for those OOAs produced by the GR construction which 
have the maximum number of $t$-sets of columns covered, 
Table~\ref{Poly2} provides the polynomial(s) that produce the 
corresponding OOA(s). In the ``Property'' column of these table, 
$P$ denotes a primitive polynomial, $I$ denotes an irreducible but 
not primitive polynomial and $R$ denotes a reducible polynomial, 
whose factorization is provided. Observe that when $t=8,9,10,11$, the polynomials that provide the best coverage are reducible.
This shows that, in general, primitivity is not essential for best coverage.

\begin{table}[H]
\caption{Comparison between GR, RUNS and RTS constructions in $\FF_2$.}
\begin{tiny}
\begin{center}
\begin{tabular}{|cc||ccccc|ccccc|cc|}\hline
$t$ & cols & $\#$ & GR$_{min}$  & GR$_{max}$ & GR$_{avg}$ & GR$_{SD}$ & $\#$ & R$_{min}$ & R$_{max}$ & R$_{avg}$ & R$_{SD}$ & $\#$ & RTS \\ \hline
 2 &   6 &   1 & 0.7333 & 0.7333       & 0.7333 & 0.0000      &   1 & 0.7333 & 0.7333 & 0.7333 & 0.0000 & 1 & 0.7333 \\ \hline
 3 &   9 &   2 & 0.5952 & 0.5952       & 0.5952 & 0.0000      &   2 & 0.5952 & 0.5952 & 0.5952 & 0.0000 & 1 & 0.4643 \\ \hline
 4 &  12 &   4 & 0.3556 & 0.5232       & 0.4323 & 0.0844      &   2 & 0.4848 & 0.5232 & 0.5040 & 0.0271 & 1 & 0.3455 \\ \hline
 5 &  15 &   8 & 0.3413 & 0.4695       & 0.4186 & 0.0551      &   6 & 0.3863 & 0.4695 & 0.4444 & 0.0326 & 1 & 0.1968 \\ \hline
 6 &  18 &  16 & 0.2265 & 0.4465       & 0.3627 & 0.0726      &   6 & 0.3891 & 0.4465 & 0.4100 & 0.0235 & 1 & 0.1357 \\ \hline
 7 &  21 &  32 & 0.3088 & 0.4237       & 0.3660 & \bf{0.0305} &  18 & 0.3088 & 0.4237 & 0.3631 & 0.0325 & 1 & 0.0897 \\ \hline
 8 &  24 &  64 & 0.0814 & {\bf 0.4174} & 0.3364 & 0.0816      &  16 & 0.3471 & 0.3984 & 0.3776 & 0.0141 & 1 & 0.0736 \\ \hline
 9 &  27 & 128 & 0.1109 & {\bf 0.3947} & 0.3280 & 0.0491      &  48 & 0.1955 & 0.3726 & 0.3273 & 0.0400 & 1 & 0.0391 \\ \hline
10 &  30 & 256 & 0.0777 & {\bf 0.3716} & 0.3129 & 0.0536      &  60 & 0.2525 & 0.3698 & 0.3222 & 0.0337 & 1 & 0.0255 \\ \hline
11 &  33 & 512 & 0.1490 & {\bf 0.3565} & 0.3109 & 0.0345      & 176 & 0.1903 & 0.3560 & 0.3154 & 0.0280 & 1 & 0.0160 \\ \hline
\noalign{\smallskip}
\end{tabular}
\label{Comp2}
\end{center}
\end{tiny}
\end{table}

\begin{table}[H]
\caption{Polynomials in $\FF_2[x]$ giving maximum coverage.}
\label{Poly2}
\vspace{0.1in}
\centering
\begin{small}
\begin{tabular}{|cc||l|c|l|}\hline
$t$ & cols & Polynomial                    & Property  & Factorization \\ \hline
 2   &  6      &  $x^{2} + x + 1$            & P         &    \\ \hline
 3   &  9      & $x^3+x  +1$                 & P         &    \\  
	&         & $x^3+x^2+1$                  & P         &    \\ \hline 
 4   & 12      & $x^4+x^3+1$                 & P         &    \\ \hline 
 5   & 15      & $x^5+x^4+x^3+x+1$           & P         &    \\ \hline 
 6   & 18      & $x^6 + x^4 + x^3 + x + 1$   & P         &    \\ \hline
 7   & 21      & $x^7 + x^5 + x^2 + x + 1$   & P         &    \\ \hline
 8   & 24      & $x^8 + x^6 + x^4 + x^3 + 1$ & R         & $(x^2 + x + 1) (x^6 + x^5 + x^4 + x + 1)$ \\ \hline
 9   & 27      & $x^9 + x^7 + x^6 + x^4 +x^2 + x + 1$   & R       & $(x^2 + x + 1)(x^7 + x^6 + x^5 + x^4 + 1)$
   \\ \hline
10   & 30      & $x^{10} + x^9 + x^8 + x^7 + x^4+ x^2 +  1$  & R & $(x^3 + x^2 + 1)(x^7 + x^5 + x^4 + x^3 + 1)$
  \\ \hline
11   & 33      & $x^{11} + x^9 + x^7 + x^6 + 1$   & R     & 
$(x^2 + x + 1)^3(x^5 + x^4 + x^2 + x + 1)$
 \\ \hline

\end{tabular}
\end{small}
\end{table}

For some applications, there may not be a need for the OOA to have
the maximum number of columns, but it may be that the percentage of 
covered $t$-sets of columns is more important. Table~\ref{Cov2} 
provide statistics on the coverage in the OOAs produced by the 
GR construction when the maximum number of columns is not produced.

\begin{table}[H]	
\caption{Percentage of covered $t$-sets for OOAs using GR construction in $\FF_2$.}
\label{Cov2}
\vspace{0.1in}	
\centering
	\begin{small}
\begin{tabular}{|cc||cccc|}\hline
$t$ & cols & $\#$ & GR$_{min}$  & GR$_{max}$ & GR$_{avg}$  \\ \hline
2   &  4     &  1    & 0.6667    & 0.6667   & 0.6667   \\ \hline
3   &  6     &  2    & 0.4000    & 0.6000   & 0.5000   \\ \hline 
4   &  8     &  4    & 0.2286    & 0.6286   & 0.4893   \\ \hline
5   & 10     &  8    & 0.1270    & 0.6190   & 0.4719   \\ \hline
6   & 12     &  16   & 0.2265    & 0.5541   & 0.4467   \\ \hline
7   & 14     &  32   & 0.0373    & 0.5035   & 0.3957   \\ \hline
8   & 16     &  64   & 0.0199    & 0.4867   & 0.3717   \\ \hline
\end{tabular}
\end{small}	
\end{table}

\noindent 
In Appendix~\ref{App:stat_analysis} we provide similar analyses for 
$\FF_q[x]$ when $q = 3,4,5,7,8,9$.

Table~\ref{star_discrepancy} shows the star-discrepancy of some $(t,m,s)$-nets from the RTS construction and the best (smallest) star-discrepancy of some of the $(t,m,s)$-nets from the GR construction.  The minimum of the GR construction is never worse than the RTS construction and is usually better, sometimes substantially.
\begin{table}[H]	
\caption{The star-discrepancy of the $(t,m,s)$-nets using the RTS and GR constructions.}
\label{star_discrepancy}
\vspace{0.1in}	
\centering  
\begin{tabular}{|cc||cc|}\hline
$q$ & $t$ & RTS & GR$_{max}$ \\ \hline 
2 & 2 & 0.5781 & 0.5781 \\ \hline
2 & 3 & 0.4531 & 0.3301 \\ \hline
2 & 4 & 0.2349 & 0.1855 \\ \hline
2 & 5 & 0.1621 & 0.1089 \\ \hline
2 & 6 & 0.0818 & 0.0648 \\ \hline \hline
3 & 2 & 0.4193 & 0.4193 \\ \hline
3 & 3 & 0.2310 & 0.1497 \\ \hline
3 & 4 & 0.1139 & 0.0757 \\ \hline
3 & 5 & 0.0475 & 0.0327 \\ \hline \hline
4 & 2 & 0.9799 & 0.7897 \\ \hline
4 & 3 & 0.9767 & 0.7111 \\ \hline \hline
5 & 2 & 0.3108 & 0.2854 \\ \hline
5 & 3 & 0.1485 & 0.0823 \\ \hline
  \end{tabular}
\end{table}

\section{Summary}
\label{Sec:summ}
In this paper we generalize the construction of OOAs given in
\cite{Castoldi} from using LFSRs defined by primitive polynomials to
LFSRs using a far larger class of polynomials. While doing so, we also
significantly simplify the proofs from \cite{Castoldi}.  For small
field size and small degree, we provide data on how close the OOAs
constructed are to being full OAs and compare the results to earlier
constructions. Additionally we compute the star-discrepancies of the
$(t,m,s)$-nets constructed which gives the accuracy for use in
quasi-Monte Carlo numerical integration.  In both measures the OOAs
and $(t,m,s)$-nets from the GR construction are at least the best
known and often better.  Provided examples show that primitivity is
not a requirement for the construction of OOAs having best
coverage. Indeed examples show that non-primitivity is sometimes a
requirement for the construction of OOAs having best coverage using
our method.



\appendix
\section{Statistical data for OOA produced by GR in $\FF_q, q = 3,4,5,7,8,9$}
\label{App:stat_analysis}
As was done in Section~\ref{Sec:ff2}, we provide data about the OOAs 
produced by our new construction for larger fields. For some fields 
and polynomial degree, when the number of polynomials giving the OOA(s)
with best coverage is large, we do not provide the polynomials but 
only a count.

\begin{table}[H]
\caption{Comparison between GR and RUNS constructions in $\FF_3$.}
\begin{tiny}
\begin{center}
\begin{tabular}{|cc||ccccc|ccccc|cc|}\hline
$t$ & $cols$ & $\#$ & $GR_{min}$  & $GR_{max}$ & $GR_{avg}$ & $GR_{SD}$ & $\#$ & $R_{min}$ & $R_{max}$ & $R_{avg}$ & $R_{SD}$ & $\#$ & $RTS$ \\ \hline
2 &  8 &    3 & 0.7500 & 0.8214       & 0.7976 & 0.0412       &   2 & 0.8214 & 0.8214 & 0.8214 & 0.0000 & 1 & 0.7500 \\ \hline
3 & 12 &    8 & 0.7091 & {\bf 0.7409} & 0.7239 & {\bf 0.0159} &   4 & 0.7091 & 0.7409 & 0.7239 & 0.0172 & 1 & 0.5455 \\ \hline
4 & 16 &   24 & 0.5126 & 0.7027       & 0.6316 & {\bf 0.0438} &   8 & 0.5885 & 0.7027 & 0.6321 & 0.0458 & 1 & 0.3258 \\ \hline
5 & 20 &   72 & 0.5320 & {\bf 0.6654} & 0.6141 & 0.0369       &  22 & 0.6029 & 0.6607 & 0.6334 & 0.0182 & 1 & 0.2433 \\ \hline
6 & 24 &  216 & 0.2053 & {\bf 0.6346} & 0.5813 & 0.0548       &  48 & 0.4531 & 0.6338 & 0.5917 & 0.0403 & 1 & 0.2053 \\ \hline
7 & 28 &  648 & 0.4652 & {\bf 0.6169} & 0.5799 & 0.0262       & 156 & 0.4974 & 0.6152 & 0.5830 & 0.0212 & 1 & 0.1245 \\ \hline
8 & 32 & 1944 & 0.1493 & {\bf 0.6010} & 0.5671 & 0.0301       & 320 & 0.4627 & 0.5992 & 0.5683 & 0.0226 & 1 & 0.0972 \\ \hline
\noalign{\smallskip}
\end{tabular}
\end{center}
\end{tiny}
\end{table}

\begin{table}[H]
\caption{Polynomials in $\FF_3[x]$ giving maximum coverage.}
\label{Poly3}
\vspace{0.1in}
\centering
\begin{tiny}
\begin{tabular}{|cc||l|c|l|}\hline 
$t$ & $cols$ & Polynomial                    & Property & Factorization  \\ \hline
2   &  8     & $x^{2} + x + 2$                   & P    &                \\
	&        & $x^{2} + 2 x + 2$                 & P    &                \\	\hline
3   & 12     & $x^3+2x^2+1$                      & P    &                \\  
	&        & $x^3+x^2+2$                       & I    &                \\ \hline 
4   & 16     & $x^4+x^3+2x^2+2x+2$               & P    &                \\ 
	&        & $x^4+2x^3+2x^2+x+2$               & P    &                \\ \hline 
5   & 20     & $x^5 + 2x^3 + x^2 + 2x + 2$       & I    &                \\
	&        & $x^5 + 2x^3 + 2x^2 + 2x + 1$      & I    &                \\ \hline
6   & 24     & $x^6 + 2x^4 + x^3 + 2x^2 + x + 1$ & R    & $(x^2 + 1)(x^4 + x^2 + x + 1)$    \\
	&        & $x^6 + 2x^4 + 2x^3 + 2x^2 + 2x + 1$ & R  & $(x^2 + 1) (x^4 + x^2 + 2x + 1)$ \\ \hline
7   & 28     & $x^7 + x^6 + x^5 + x^3 + x + 1$   & R  & $(x + 2)(x^2 + 1)(x^4 + 2x^3 + 2x^2 + x + 2)$  \\
    &        & $x^7 + 2x^6 + x^5 + x^3 + 2$      & R  & $(x^2 + 2x + 2)(x^5 + 2x^3 + 2x^2 + 2x + 1)$  \\  \hline
8   & 32     & $x^8 + x^5 + x^4 + 2x^3 + 2x^2 + 2x + 2$ & I & \\
    &        & $x^8 + 2x^5 + x^4 + x^3 + 2x^2 + x + 2$  & I &  \\ \hline
\end{tabular}
\end{tiny}
\end{table}

\begin{table}[H]
\caption{Percentage of covered $t$-sets for OOAs using GR construction in $\FF_3$.}
\label{Cov3}
\vspace{0.1in}
\centering
\begin{tiny}
\begin{tabular}{|ccc||ccc|}\hline
$t$ & $cols$ & $\#$ & GR$_{min}$  & GR$_{max}$ & GR$_{avg}$ \\ \hline
2   &  6     &   2    & 0.7333   & 0.7333   & 0.7333   \\
    &  4     &   1    & 0.6667   & 0.6667   & 0.6667   \\ \hline
3   &  6     &   8    & 0.5357   & 0.7262   & 0.6250   \\ 
    &  9     &   2    & 0.9000   & 0.9000   & 0.9000   \\ \hline
4   & 12     &  24    & 0.4808   & 0.7495   & 0.6535   \\
    &  8     &   6    & 0.2286   & 0.8571   & 0.5405   \\ \hline 
5   & 15     &  72    & 0.3613   & 0.7083   & 0.6037   \\
	& 10     &  18    & 0.2540   & 0.7698   & 0.6332   \\ \hline
6	& 18     & 216    & 0.1872   & 0.6740   & 0.6077   \\
	& 12     &  54    & 0.0693   & 0.7370   & 0.5853   \\
\hline
\end{tabular}
\end{tiny}
\end{table}

\newpage

\begin{table}[H]
\caption{Comparison between GR and RUNS constructions in $\FF_4$.}
\begin{tiny}
\begin{center}
\begin{tabular}{|cc||ccccc|ccccc|cc|}\hline
$t$ & $cols$ & $\#$ & $GR_{min}$  & $GR_{max}$ & $GR_{avg}$ & $GR_{SD}$ & $\#$ & $R_{min}$ & $R_{max}$ & $R_{avg}$ & $R_{SD}$ & $\#$ & $RTS$ \\ \hline
2 &  10 &   6 & 0.8667 & {\bf 0.8667} & 0.8667 & 0.0000       &   4 & 0.8667 & 0.8667 & 0.8667 & 0.0000 & 1 & 0.7566 \\ \hline
3 &  15 &  20 & 0.6505 &  0.8220     & 0.7585 & 0.0576       &  12 & 0.7604 & 0.8220 & 0.7912 & 0.0321 & 1 & 0.4879 \\ \hline
4 &  20 &  81 & 0.5317 & {\bf 0.7670} & 0.7279 & 0.0624       &  32 & 0.7397 & 0.7641 & 0.7524 & 0.0091 & 1 & 0.4017 \\ \hline
5 &  25 & 324 & 0.6165 & {\bf 0.7423} & 0.7141 & {\bf 0.0258} & 120 & 0.6165 & 0.7423 & 0.7133 & 0.0307 & 1 & 0.2268 \\ \hline
\noalign{\smallskip}
\end{tabular}
\end{center}
\end{tiny}
\end{table}

\begin{table}[H]
\caption{Polynomials in $\FF_4[x]$ giving maximum coverage.}
\label{Poly4}
\vspace{0.1in}
\centering
\begin{tiny}
\begin{tabular}{|cc||l|c|}\hline
$t$ & $cols$ & Polynomial          & Property \\ \hline
2 &  10 &  $x^{2} + \omega x + \omega$ & P \\
  &     &  $x^{2} + \left(\omega + 1\right) x + \omega + 1$ & P \\
  &     &  $x^{2} + x + \omega$ & P \\
  &     &  $x^{2} + x + \omega + 1$ & P \\
  &     &  $x^{2} + \omega x + 1$ & I \\
  &     &  $x^{2} + \left(\omega + 1\right) x + 1$ & I \\ \hline

3 &  15 &  $x^{3} + \omega x^{2} + \left(\omega + 1\right) x + \omega$ & P \\
  &     &  $x^{3} + \omega x^{2} + \left(\omega + 1\right) x + \omega + 1$ & P \\
  &     &  $x^{3} + \left(\omega + 1\right) x^{2} + \omega x + \omega$ & P \\
  &     &  $x^{3} + \left(\omega + 1\right) x^{2} + \omega x + \omega + 1$ & P \\
  &     &  $x^{3} + x^{2} + x + \omega$ & P \\
  &     &  $x^{3} + x^{2} + x + \omega + 1$ & P \\ \hline

4 &  20 &  $x^{4} + \omega x^{3} + \omega x^{2} + \left(\omega + 1\right) x + \omega + 1$ & I \\
  &     &  $x^{4} + \omega x^{3} + x^{2} + \omega x + 1$ & I \\
  &     &  $x^{4} + \left(\omega + 1\right) x^{3} + \left(\omega + 1\right) x^{2} + \omega x + \omega$ & I \\
  &     &  $x^{4} + \left(\omega + 1\right) x^{3} + x^{2} + \left(\omega + 1\right) x + 1$ & I \\
  &     &  $x^{4} + x^{3} + \omega x^{2} + \omega x + \omega + 1$ & I \\
  &     &  $x^{4} + x^{3} + \left(\omega + 1\right) x^{2} + \left(\omega + 1\right) x + \omega$ & I \\ \hline

5 &  25 &  $x^{5} + \omega x^{4} + x^{3} + \omega x^{2} + \omega$ & P \\
  &     &  $x^{5} + \left(\omega + 1\right) x^{4} + x^{3} + \left(\omega + 1\right) x^{2} + \omega + 1$ & P \\
  &     &  $x^{5} + x^{4} + \omega x^{3} + \omega x^{2} + \omega + 1$ & P \\
  &     &  $x^{5} + x^{4} + \left(\omega + 1\right) x^{3} + \left(\omega + 1\right) x^{2} + \omega$ & P \\
  &     &  $x^{5} + \omega x^{4} + \omega x^{3} + \left(\omega + 1\right) x^{2} + 1$ & I \\
  &     &  $x^{5} + \left(\omega + 1\right) x^{4} + \left(\omega + 1\right) x^{3} + \omega x^{2} + 1$ & I \\ \hline
\end{tabular}
\end{tiny}
\end{table}


\begin{table}[H]
\caption{Percentage of covered $t$-sets for OOAs using GR construction in $\FF_4$.}
\label{Cov4}
\vspace{0.1in}
\centering
\begin{tiny}
\begin{tabular}{|cc||cccc|}\hline
$t$ & $cols$ & $\#$ & GR$_{min}$  & GR$_{max}$ & GR$_{avg}$ \\ \hline     
2 &   8 &   3 & 0.7500 & 0.7500 & 0.7500 \\
  &   6 &   3 & 0.7333 & 0.7333 & 0.7333 \\ \hline
3 &  12 &  21 & 0.7545 & 0.7591 & 0.7558 \\
  &   9 &   6 & 0.8214 & 0.8214 & 0.8214 \\
  &   6 &   1 & 0.4000 & 0.4000 & 0.4000 \\ \hline
4 &  16 &  81 & 0.3868 & 0.7973 & 0.7263 \\
  &  12 &  27 & 0.3556 & 0.8364 & 0.7542 \\
  &   8 &   3 & 0.4714 & 0.4714 & 0.4714 \\ \hline
5 &  20 & 324 & 0.4717 & 0.7608 & 0.7135 \\
  &  15 & 108 & 0.3413 & 0.7812 & 0.7010 \\
  &  10 &  12 & 0.2540 & 0.7063 & 0.5714 \\ \hline
\end{tabular}
\end{tiny}
\end{table}

\newpage

\begin{table}[H]
\caption{Comparison between GR and RUNS constructions in $\FF_5$.}
\begin{tiny}
\begin{center}
\begin{tabular}{|cc||ccccc|ccccc|cc|}\hline
$t$ & $cols$ & $\#$ & $GR_{min}$  & $GR_{max}$ & $GR_{avg}$ & $GR_{SD}$ & $\#$ & $R_{min}$ & $R_{max}$ & $R_{avg}$ & $R_{SD}$ & $\#$ & $RTS$ \\ \hline
2 &  12 &  10 & 0.8485 & {\bf 0.8788} & 0.8667 & 0.01565       &   4 & 0.8788 & 0.8788 & 0.8788 & 0.0000 & 1 & 0.7576 \\ \hline
3 &  18 &  40 & 0.8100 & {\bf 0.8395} & 0.8243 & {\bf 0.0102} &  20 & 0.8100 & 0.8395 & 0.8244 & 0.0104 & 1 & 0.5735 \\ \hline
4 &  24 & 205 & 0.6763 & {\bf 0.8095} & 0.7808 & 0.02020       &  48 & 0.7768 & 0.8015 & 0.7879 & 0.0077 & 1 & 0.4496 \\ \hline
\noalign{\smallskip}
\end{tabular}
\end{center}
\end{tiny}
\end{table}
\begin{table}[H]
\caption{Polynomials in $\FF_5[x]$ giving maximum coverage.}
\label{Poly5}
\vspace{0.1in}
\centering
\begin{tiny}
\begin{tabular}{|cc||l|c|}\hline
$t$ & $cols$ & Polynomial          & Property \\ \hline
2 &  12 &  $x^{2} + x + 2$   & P \\
  &     &  $x^{2} + 2 x + 3$ & P \\
  &     &  $x^{2} + 3 x + 3$ & P \\
  &     &  $x^{2} + 4 x + 2$ & P \\
  &     &  $x^{2} + 2$       & I \\
  &     &  $x^{2} + 3$       & I \\
\hline
3 &  18 &  $x^{3} + 3 x + 2$     & P \\
  &     &  $x^{3} + 3 x + 3$     & P \\
  &     &  $x^{3} + x^{2} + 2$   & P \\
  &     &  $x^{3} + 4 x^{2} + 3$ & P \\
  &     &  $x^{3} + 2 x + 1$     & I \\
  &     &  $x^{3} + 2 x + 4$     & I \\
  &     &  $x^{3} + 2 x^{2} + 1$ & I \\
  &     &  $x^{3} + 3 x^{2} + 4$ & I \\
\hline
4 &  24 &  $x^{4} + x^{3} + 2 x^{2} + x + 3$     & I \\
  &     &  $x^{4} + 2 x^{3} + 3 x^{2} + 3 x + 3$ & I \\
  &     &  $x^{4} + 3 x^{3} + 3 x^{2} + 2 x + 3$ & I \\
  &     &  $x^{4} + 4 x^{3} + 2 x^{2} + 4 x + 3$ & I \\
\hline
\end{tabular}
\end{tiny}
\end{table}

\begin{table}[H]
\caption{Percentage of covered $t$-sets for OOAs using GR construction in $\FF_5$.}
\label{Cov5}
\centering
\vspace{0.1in}
\begin{tiny}
\begin{tabular}{|ccc||ccc|}\hline
$t$ & $cols$ & $\#$ & GR$_{min}$ & GR$_{max}$ & GR$_{avg}$    \\ \hline 
2 &  10 &   4   & 0.8667 & 0.8667 & 0.8667 \\
  &   8 &   6   & 0.7500 & 0.8214 & 0.7976 \\ \hline
3 &  15 &  44   & 0.7736 & 0.8505 & 0.8096  \\ 
  &  12 &  12   & 0.7455 & 0.7636 & 0.7530  \\ 
  &  9  &   4   & 0.7619 & 0.7619 & 0.7619  \\ \hline
4 &  20 & 204   & 0.7129 & 0.8233 & 0.7845 \\
  &  16 &  78   & 0.7412 & 0.8495 & 0.8067 \\
  &  12 &  12   & 0.7939 & 0.8364 & 0.8222 \\
  &   8 &   1   & 0.2286 & 0.2286 & 0.2286 \\ \hline
\end{tabular}
\end{tiny}
\end{table}

\newpage

\begin{table}[H]
\caption{Comparison between GR and RUNS constructions in $\FF_7$.}
\begin{tiny}
\begin{center}
\begin{tabular}{|cc||ccccc|ccccc|cc|}\hline
$t$ & $cols$ & $\#$ & $GR_{min}$  & $GR_{max}$ & $GR_{avg}$ & $GR_{SD}$ & $\#$ & $R_{min}$ & $R_{max}$ & $R_{avg}$ & $R_{SD}$ & $\#$ & $RTS$ \\ \hline
2 &  16 &  21 & 0.8917 & \bf{0.9083} & 0.9012 & 0.0085 &   8 & 0.9083 & 0.9083 & 0.9083 & 0.0000 & 1 & 0.7583 \\ \hline
3 &  24 & 112 & 0.8375 & \bf{0.8898} & 0.8630 & 0.0125 &  36 & 0.8533 & 0.8898 & 0.8671 & 0.0114 & 1 & 0.5830 \\ \hline
4 &  32 & 819 & 0.8175 & \bf{0.8643} & 0.8470 & 0.0104 & 160 & 0.8312 & 0.8643 & 0.8513 & 0.0089 & 1 & 0.4653\\ \hline
\noalign{\smallskip}
\end{tabular}
\end{center}
\end{tiny}
\end{table}

\begin{table}[H]
\caption{Polynomials in $\FF_7[x]$ giving maximum coverage.}
\label{Poly7}
\vspace{0.1in}
\centering
\begin{tiny}
\begin{tabular}{|cc||l|c|}\hline
$t$ & $cols$ & Polynomial          & Property \\ \hline
2 &  16 &  $x^{2} + x + 3$ & P \\
  &     &  $x^{2} + 2 x + 3$ & P \\
  &     &  $x^{2} + 2 x + 5$ & P \\
  &     &  $x^{2} + 3 x + 5$ & P \\
  &     &  $x^{2} + 4 x + 5$ & P \\
  &     &  $x^{2} + 5 x + 3$ & P \\
  &     &  $x^{2} + 5 x + 5$ & P \\
  &     &  $x^{2} + 6 x + 3$ & P \\
  &     &  $x^{2} + x + 6$   & I \\
  &     &  $x^{2} + 3 x + 6$ & I \\
  &     &  $x^{2} + 4 x + 6$ & I \\
  &     &  $x^{2} + 6 x + 6$ & I \\ \hline
3 &  24 &  $x^{3} + x^{2} + 5 x + 2$   & P \\
  &     &  $x^{3} + 2 x^{2} + 6 x + 2$ & P \\
  &     &  $x^{3} + 4 x^{2} + 3 x + 2$ & P \\
  &     &  $x^{3} + 3 x^{2} + 3 x + 5$ & I \\
  &     &  $x^{3} + 5 x^{2} + 6 x + 5$ & I \\
  &     &  $x^{3} + 6 x^{2} + 5 x + 5$ & I \\ \hline
4 &  32 &  $x^{4} + x^{3} + 6 x^{2} + 2 x + 5$   & P \\
  &     &  $x^{4} + 2 x^{3} + 3 x^{2} + 2 x + 3$ & P \\
  &     &  $x^{4} + 5 x^{3} + 3 x^{2} + 5 x + 3$ & P \\
  &     &  $x^{4} + 6 x^{3} + 6 x^{2} + 5 x + 5$ & P \\
  &     &  $x^{4} + 3 x^{3} + 5 x^{2} + 5 x + 6$ & I \\
  &     &  $x^{4} + 4 x^{3} + 5 x^{2} + 2 x + 6$ & I \\ \hline
\end{tabular}
\end{tiny}
\end{table}


\begin{table}[H]
\caption{Percentage of covered $t$-sets for OOAs using GR construction in $\FF_7$.}
\label{Cov7}
\centering
\vspace{0.1in}
\begin{tiny}
\begin{tabular}{|ccc||ccc|}\hline
$t$ & $cols$ & $\#$ & GR$_{min}$ & GR$_{max}$ & GR$_{avg}$ \\ \hline 
2   &  14 &   6 & 0.9121 & 0.9121 & 0.9121            \\
    &  12 &  15 & 0.8485 & 0.8788 & 0.8667             \\ \hline
3   & 21     & 132   & 0.8346   & 0.8925   & 0.8629   \\
	& 18     &  30   & 0.8407   & 0.8824   & 0.8652   \\
	& 15     &  20   & 0.8550   & 0.8879   & 0.8681   \\ \hline
4   &  28 & 804 & 0.8232 & 0.8666 & 0.8505 \\
    &  24 & 360 & 0.7987 & 0.8667 & 0.8459 \\
    &  20 &  60 & 0.8285 & 0.8716 & 0.8498 \\
    &  16 &  15 & 0.7396 & 0.8071 & 0.7809 \\ \hline
\end{tabular}
\end{tiny}
\end{table}

\newpage

\begin{table}[H]
\begin{tiny}
\caption{Comparison between GR and RUNS constructions in $\FF_8$.}
\begin{center}
\begin{tabular}{|cc||ccccc|ccccc|cc|}\hline
$t$ & $cols$ & $\#f$ & $GR_{min}$  & $GR_{max}$ & $GR_{avg}$ & $GR_{SD}$ 
& $\#f$ & $R_{min}$ & $R_{max}$ & $R_{avg}$ & $R_{SD}$ & $\#f$ & $RTS$ 
	\\ \hline
2 &  18 &  28 & 0.9346 & {\bf 0.9346} & 0.9346 & 0.0000 & 18 & 0.9346 & 0.9346 & 0.9346 & 0.0000 & 1 & 0.7583 \\ \hline
3 &  27 & 168 & 0.8776 & {\bf 0.8916} & 0.8846 & 0.0053 & 144 & 0.8776 & 0.8916& 0.8844 & 0.0053 & 1 & 0.4909 \\ \hline
\noalign{\smallskip}
\end{tabular}
\end{center}
\end{tiny}
\end{table}

\begin{table}[H]
\caption{Polynomials in $\FF_8[x]$ giving maximum coverage.}
\label{Poly8}
\centering
\vspace{0.1in}
\begin{tiny}
\begin{tabular}{|cc||c|c|}\hline
$t$ & $cols$ & $\#$ polynomials  & Property \\ \hline
2  &  18 & 18  & P \\
   &     & 10  & I \\
   &     &  0  & R \\ \hline
3  &  27 & 18  & P \\
   &     & 10  & I \\
   &     &  0  & R \\ \hline
\end{tabular}
\end{tiny}
\end{table}


\begin{table}[H]
\caption{Percentage of covered $t$-sets for OOAs using GR construction in $\FF_8$.}
\label{Cov8}
\centering
\vspace{0.1in}
\begin{tiny}
\begin{tabular}{|ccc||ccc|}\hline
$t$ & $cols$ & $\#$ & GR$_{min}$ & GR$_{max}$ & GR$_{avg}$ \\ \hline 
2 &  16 &   7 & 0.7583 & 0.7583 & 0.7583 \\
  &  14 &  21 & 0.9121 & 0.9121 & 0.9121 \\ \hline
3 &  24 & 203 & 0.8518 & 0.8928 & 0.8809 \\
  &  21 &  42 & 0.8789 & 0.8805 & 0.8797 \\
  &  18 &  35 & 0.8505 & 0.8615 & 0.8571 \\ \hline
\end{tabular}
\end{tiny}
\end{table}

\begin{table}[H]
\caption{Comparison between GR and RUNS constructions in $\FF_9$.}
\begin{tiny}
\begin{center}
\begin{tabular}{|cc||ccccc|ccccc|cc|}\hline
$t$ & $cols$ & $\#$ & $GR_{min}$  & $GR_{max}$ & $GR_{avg}$ & $GR_{SD}$ & $\#$ & $R_{min}$ & $R_{max}$ & $R_{avg}$ & $R_{SD}$ & $\#$ & $RTS$ \\ \hline
2 &  20 &  36 & 0.9158 & {\bf 0.9263} & 0.9216 & 0.0053&  16 & 0.9263 & 0.9263 & 0.9263 & 0.0000 & 1 & 0.7579 \\ \hline
3 &  30 & 240 & 0.8702 & {\bf 0.9086} & 0.9005 & 0.0107&  96 & 0.8704 & 0.9086 & 0.9015 & 0.0101 & 1 & 0.5872 \\ \hline
\noalign{\smallskip}
\end{tabular}
\end{center}
\end{tiny}
\end{table}

\begin{table}[H]
\caption{Polynomials in $\FF_9[x]$ giving maximum coverage.}
\label{Poly9}
\vspace{0.1in}
\centering
\begin{tiny}
\begin{tabular}{|cc||c|c|}\hline
$t$ & $cols$ & $\#$ polynomials  & Property \\ \hline
2  &  20 & 16 & P \\
   &     &  4 & I \\
   &     &  0 & R \\ \hline
3  &  30 &  8 & P \\
   &     &  8 & I \\
   &     &  0 & R \\ \hline
\end{tabular}
\end{tiny}
\end{table}


\begin{table}[H]
\caption{Percentage of covered $t$-sets for OOAs using GR construction in $\FF_9$.}
\label{Cov9}
\vspace{0.1in}
\centering
\begin{tiny}
\begin{tabular}{|ccc||ccc|}\hline
$t$ & $cols$ & $\#$ & GR$_{min}$ & GR$_{max}$ & GR$_{avg}$ \\ \hline 
2 &  18 &   8 & 0.9346 & 0.9346 & 0.9346 \\
  &  16 &  28 & 0.8917 & 0.9083 & 0.9012 \\ \hline
3 &  27 & 296 & 0.5874 & 0.9121 & 0.8908 \\
  &  24 &  56 & 0.8933 & 0.9032 & 0.8991 \\
  &  21 &  56 & 0.8421 & 0.9105 & 0.8922 \\ \hline
\end{tabular}
\end{tiny}
\end{table}

\end{document}